\documentclass[12pt]{amsart}
\newcounter{defcounter}
\setcounter{defcounter}{0}

\usepackage{amsmath,amssymb}

\usepackage[all]{xy}
\usepackage{pdfsync}
\usepackage[mathscr]{euscript}

\setlength{\textwidth}{6.85in}
\setlength{\topmargin} {-.2in}
 
\setlength{\evensidemargin}{-.2 in} 
\setlength{\oddsidemargin}{-.2 in}
\setlength{\footskip}{.3in}
\setlength{\headheight}{.3 in}
\setlength{\textheight}{8.6  in}
\setlength{\parskip}{.15in minus .08in} 

\setlength{\parindent}{.3 in}

\theoremstyle{plain}
\newtheorem{theorem}{Theorem}
\newtheorem{proposition}[theorem]{Proposition}

\newtheorem{lemma}[theorem]{Lemma}

\newtheorem{theoremalpha}{Theorem}

\newtheorem{conjecturealpha}[theoremalpha]{Conjecture}

\theoremstyle{definition}
\newtheorem{definition}[theorem]{Definition}

\newtheorem*{examplenn}{Example}

\newtheorem{remark}[theorem]{Remark}

\newcommand{\lra}{\longrightarrow}

\newcommand{\OO}{\mathcal{O}}

\numberwithin{theorem}{section}

\setcounter{tocdepth}{1}

\usepackage{tikz-cd}
\usepackage{verbatim}

\tikzset{
  symbol/.style={
    draw=none,
    every to/.append style={
      edge node={node [sloped, allow upside down, auto=false]{$#1$}}}
  }
}

\begin{document}

\title{Projective normality of canonical symmetric squares}

\author{John Sheridan}
  \address{Department of Mathematics, Princeton University, Princeton, New Jersey  08544}
\email{{\tt john.sheridan@princeton.edu}}

\maketitle

\section*{Introduction}

Let $C$ be a smooth complex projective curve of genus $g \geq 2$, and consider the canonical mapping $C \lra \mathbb{P}^{g-1}$ of $C$. Recall that this is an embedding when $C$ is non-hyperelliptic, in which case Noether's theorem asserts that $C \subseteq \mathbb{P}^{g-1}$ is projectively normal. Petri proved that the homogeneous ideal $I_{C/\mathbb{P}^{g-1}}$ is generated by quadrics unless $C$ is trigonal or a smooth plane quintic (i.e. unless $\text{Cliff}(C) = 1$ --- see \S\ref{setup}), and a famous conjecture of Green predicts how these statements should generalize to higher syzygies.

It is natural to ask whether any of these statements extend to surfaces of higher dimensional varieties. Of course one could look at the canonical maps of arbitrary surfaces, but in this generality one doesn't expect precise statements. On the other hand, consider the symmetric square $C_2$ of $C$ which maps canonically to $\mathbb{P}^{{g\choose 2}-1}$ (see \S \ref{setup-sym}). One might hope that the algebraic properties of this embedding combine, in an interesting way, the geometry of curves with two-dimensional syzygetic questions.

In this direction, our main result is an analogue of Noether's theorem.

\begin{theoremalpha}\label{main-theorem}
The canonical map of $C_2$ is a projectively normal embedding if and only if $C$ is neither hyperelliptic, trigonal nor a smooth plane quintic.
\end{theoremalpha}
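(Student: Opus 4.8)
The plan is to transfer everything to the curve via the quotient $\pi\colon C\times C\to C_2$ by the swap involution $\sigma$. One has $\pi^*K_{C_2}\cong K_C\boxtimes K_C(-\Delta)$ (equivalently $K_{C_2}\cong\det(\pi_*p_1^*K_C)$, the determinant of the rank-two secant bundle with fibre $H^0(K_C\otimes\mathcal O_D)$ over $D$), and, tracking the natural $\sigma$-linearization, $H^0(C_2,K_{C_2})\cong\wedge^2 H^0(C,K_C)$; thus the canonical map is the secant-line map $\{x,y\}\mapsto\overline{\phi_K(x)\,\phi_K(y)}$ into $\mathbb P^{g-1}$, Pl\"ucker-embedded. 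Likewise $\pi^*K_{C_2}^{\otimes n}\cong K_C^{\otimes n}\boxtimes K_C^{\otimes n}(-n\Delta)$, and I would filter this sheaf by powers of the diagonal, using the restriction sequences $0\to K_C^{\otimes n}\boxtimes K_C^{\otimes n}(-m\Delta)\to K_C^{\otimes n}\boxtimes K_C^{\otimes n}(-(m-1)\Delta)\to K_C^{\otimes(2n+m-1)}\to 0$ on $C\times C$ (using $N_\Delta\cong K_C^{-1}$); with K\"unneth and the $\sigma$-action tracked throughout, this expresses $H^0(C_2,K_{C_2}^{\otimes n})$ and the multiplication maps among these spaces in terms of multiplication maps of powers of $K_C$ on the curve. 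It is also useful that $K_{C_2}$ restricts to $K_C(-p)$ on each of the covering curves $C_p=\{p+q:q\in C\}\cong C$ and to $K_C^{\otimes 3}$ on the diagonal $\delta\subset C_2$.

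\textbf{Necessity.} If $C$ is hyperelliptic, the evaluation map $H^0(K_C)\to(K_C)_x\oplus(K_C)_{\iota x}$ has rank $1$ at every $g^1_2$-divisor $x+\iota x$ ($\iota$ the hyperelliptic involution), so every section $\omega_1\wedge\omega_2$ vanishes there and $|K_{C_2}|$ has base points along the curve $\{x+\iota x\}\subset C_2$; hence the canonical map is not a morphism (away from that locus it factors through $\Sym^2 C\to\Sym^2\mathbb P^1=\mathbb P^2$, which is $4:1$). If $C$ is trigonal, a general fibre $\{p,q,r\}$ of the $g^1_3$ consists of collinear points of $\mathbb P^{g-1}$, so $\mathrm{ev}_p,\mathrm{ev}_q,\mathrm{ev}_r$ span a plane and $\mathrm{ev}_p\wedge\mathrm{ev}_q$, $\mathrm{ev}_q\wedge\mathrm{ev}_r$, $\mathrm{ev}_p\wedge\mathrm{ev}_r$ are proportional; thus $\{p,q\}$, $\{q,r\}$, $\{p,r\}$ share an image and the map is not injective. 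If $C$ is a smooth plane quintic ($g=6$) the canonical map of $C_2$ is in fact an embedding — one checks that $v_2(C)\subset\mathbb P^5$ has no trisecant, no four collinear points and no flex, which yields separation of points and tangent vectors — but I would show it is not projectively normal, with the obstruction traced, through the identification above, to the failure of Petri's theorem for $C$: the quadrics through $v_2(C)$ cut out only the Veronese surface $v_2(\mathbb P^2)$. Showing this defect is not absorbed by the skew-symmetric contributions peculiar to $C_2$ is the delicate step here.

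\textbf{Sufficiency.} Assume $\Cliff(C)\geq 2$; then $g\geq 4$ and $C$ is non-hyperelliptic, non-trigonal, and not a plane quintic. First, $K_{C_2}$ is very ample: by the secant description, separation of points reduces to the canonical curve having no trisecant (else $C$ is trigonal) and no four collinear points (else a $g^2_4$ forces $\Cliff(C)=0$), and separation of tangent vectors — including along $\delta$, where $K_{C_2}|_\delta\cong K_C^{\otimes 3}$ — reduces to $h^0(K_C-3p)=g-3$ for all $p$, a consequence of non-trigonality. For projective normality I would induct on $n$ from the diagonal filtration: the inductive step needs surjectivity of $H^0(C_2,K_{C_2})\otimes H^0(C_2,K_{C_2}^{\otimes(n-1)})\to H^0(C_2,K_{C_2}^{\otimes n})$, which the filtration and K\"unneth reduce to (i) restriction-to-diagonal maps, surjective because the relevant bundles on $C\times C$ are positive enough, and (ii) multiplication maps $H^0(K_C^{\otimes a})\otimes H^0(K_C^{\otimes b})\to H^0(K_C^{\otimes(a+b)})$, surjective for $a,b\geq 2$ by the classical theory of multiplication maps on curves and, in the base cases involving a single factor $H^0(K_C)$, equivalent to Noether's theorem and to Petri's theorem for $C$ — the latter needing exactly $\Cliff(C)\geq 2$. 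The $H^1$-vanishing on $C\times C$ keeping the filtration exact on global sections comes from $H^1$-vanishing of $K_C^{\otimes a}(-D)$ for $a\geq 1$ and $\deg D$ bounded, and the passage from "all $n$" to finitely many $n$ uses bounded Castelnuovo--Mumford regularity of $C_2$ (Kodaira vanishing: $H^i(K_{C_2}^{\otimes l})=0$ for $i>0$, $l\geq 2$).

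\textbf{Main difficulty.} The heart of the matter is the boundary: the low-degree multiplication maps on $C_2$, and the pieces containing a single factor $H^0(K_C)$, where the dichotomy genuinely lives and where the plane-quintic case — invisible to dimension counts — must be handled by hand on both sides. Showing that Petri's theorem for $C$ is precisely the curve input needed, with nothing lost in pushing it through the $\sigma$-equivariant K\"unneth decomposition, is the technical core; the very-ampleness analysis and the regularity bootstrap, though not entirely routine, are secondary.
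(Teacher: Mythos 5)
Your outline defers exactly the steps that constitute the proof, and the one concrete mechanism you do propose for sufficiency --- the filtration of $K_{C_2}^{\otimes n}$ by powers of the diagonal --- does not reduce to ``multiplication maps $H^0(K_C^{\otimes a})\otimes H^0(K_C^{\otimes b})\to H^0(K_C^{\otimes (a+b)})$'' in the way you claim. The graded pieces of that filtration are not full spaces of pluricanonical sections but images of Gaussian-type maps: already for $n=1$, the sections of $K_C\boxtimes K_C(-\Delta)$ vanishing to order exactly $j$ on $\Delta$ have leading terms spanning only the image of an iterated Wahl map inside $H^0(K_C^{\otimes(j+2)})$, and the first of these, $\wedge^2 H^0(K_C)\to H^0(K_C^{\otimes 3})$, fails to be surjective for many curves with $\Cliff(C)\geq 2$ (e.g.\ curves on K3 surfaces). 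So the graded multiplication maps you would need to control are not the classical ones, and surjectivity on graded pieces is in any case only sufficient, not necessary, for surjectivity of the original map --- meaning the induction as described neither closes nor identifies the correct curve-theoretic input. Likewise, on the necessity side you assert that the plane quintic's failure of Petri ``should'' obstruct projective normality of $C_2$ but explicitly concede you have not shown the defect survives passage to $\wedge^2$; that implication is the entire content of that case.

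For comparison, the paper proceeds quite differently. Mumford regularity (Lemma \ref{regularity}), combined with Kodaira vanishing for $N_{L^2\otimes K^{-1}}$, reduces projective normality to $d$-normality for $d=2,3,4$ only --- no induction over all $n$. The case $d=2$ is handled by an explicit diagram chase showing that, granted $2$-normality of $L$ on $C$, surjectivity of $\mu_1$ is equivalent to surjectivity of $I_{C,2}\otimes S^2H^0(C,L)\to I_{C,4}$; this is precisely where quadric generation of $I_C$ (Petri, for $L=K$) enters, and, run backwards, it is also what kills the plane quintic, since there the quadrics cut out the Veronese surface and cannot generate $I_{C,4}$. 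The case $d=3$ uses the kernel bundle $M_{E_L}$ of the tautological bundle: a filtration of $M_{E_L}$ adapted to a divisor $D$ with $L(-D)$ very ample gives $H^1(S,M_{E_L}\otimes N_L^2)=0$ (Lemma \ref{3-4-norm}), and semistability of $S^2M_{E_L}$, pulled down from semistability of $M_{L(-p)}$ on $C$, gives $H^2(S,S^2M_{E_L}\otimes N_L^2)=0$ (Lemma \ref{stability}). The case $d=4$ is dispatched by Green's duality and vanishing theorems for Koszul cohomology \cite{green}. If you want to salvage your diagonal-filtration plan, you would at minimum need to replace the classical multiplication maps by the relevant Gaussian maps and prove surjectivity of those under $\Cliff(C)\geq 2$, which is a substantially harder (and in places false) statement; the paper's route through kernel bundles and Koszul duality avoids this entirely.
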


Note that here, via the result of Petri mentioned above, properties of the defining equations of $C$ are reflecting the projective normality of $C_2$. We conjecture that a similar picture continues for higher syzygies and higher dimensions. Namely, say that a line bundle $L$ on $C$ satisfies \textit{syzygy shifting (to dimension $k$)} if, whenever $L$ has property $N_p$ (i.e. has linear syzygies to order $p \geq 0$ --- see \S\ref{setup}), the associated line bundle $N_L := \text{det}(L^{[k]})$ on $C_k$ has property $N_{p-(k-1)}$.

\begin{conjecturealpha}[Syzygy shifting]\label{syzygy-shifting}
The canonical bundle $K_C$ on $C$ satisfies syzygy shifting to dimension $k$ when $\textnormal{Cliff}(C) \geq k$.
\end{conjecturealpha}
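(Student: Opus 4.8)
The plan is to establish Theorem~\ref{main-theorem} --- whose ``if'' direction is the case $k=2$ (and $p=1$) of Conjecture~\ref{syzygy-shifting}: by Petri's theorem $K_C$ has $N_1$ exactly when $\Cliff(C)\ge 2$, and $N_{K_C}$ having $N_{1-(2-1)}=N_0$ is projective normality. Write $N:=N_{K_C}=\det(K_C^{[2]})$. One first records that $N=\omega_{C_2}$: under the double cover $\pi\colon C\times C\to C_2$ one has $\pi^*N=(K_C\boxtimes K_C)(-\Delta)=\omega_{C\times C}(-\Delta)=\pi^*\omega_{C_2}$, and for non-hyperelliptic $C$ the Abel--Jacobi map embeds $C_2$ into $\Jac(C)$ with cotangent bundle $K_C^{[2]}$. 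This gives the tautological exact sequence
\[
0\longrightarrow M_{K_C}^{[2]}\longrightarrow H^0(K_C)\otimes\OO_{C_2}\longrightarrow K_C^{[2]}\longrightarrow 0,\qquad \det M_{K_C}^{[2]}=N^{-1},
\]
available because a non-hyperelliptic curve carries no moving divisor of degree $2$, so $H^0(K_C)\to H^0(K_C|_D)$ is onto for every $D\in C_2$. The hyperelliptic case of the ``only if'' direction is then settled with no cohomology: the hyperelliptic involution $\iota$ acts trivially on $H^0(K_C)$, hence on $\wedge^2H^0(K_C)=H^0(\omega_{C_2})$, while it acts nontrivially on $C_2$ (fixed locus $\{p+\iota p\}\cong\mathbb{P}^1$, of dimension $1$), so the canonical map of $C_2$ factors through $C_2/\iota$ and is not injective. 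It remains to prove that, for non-hyperelliptic $C$, the canonical map of $C_2$ is a projectively normal embedding if and only if $K_C$ has $N_1$; this yields the ``if'' direction of Theorem~\ref{main-theorem} and the trigonal and plane-quintic cases of ``only if'' simultaneously.

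Granting very-ampleness of $\omega_{C_2}$ for non-hyperelliptic $C$ --- a preliminary step handled via the description of the canonical map of $C_2$ as $D\mapsto\overline D$ (the $2$-plane spanned by $D$ in the canonical embedding of $C$) together with standard facts on special linear series --- the usual kernel-bundle criterion reduces projective normality to
\[
H^1\!\big(C_2,\,M_N\otimes N^{\otimes m}\big)=0\quad(m\ge 1),\qquad M_N:=\ker\!\big(H^0(N)\otimes\OO_{C_2}\to N\big).
\]
To compute these I would transport everything to the curve. Using $H^i(C_2,-)=H^i(C\times C,\pi^*-)^{S_2}$, one first expresses $\pi^*M_N$ through $M_{K_C}$: comparing $0\to\pi^*M_N\to\wedge^2H^0(K_C)\otimes\OO\to(K_C\boxtimes K_C)(-\Delta)\to0$ with the Koszul-filtered kernel of $H^0(K_C)^{\otimes2}\otimes\OO\to K_C\boxtimes K_C$ (graded pieces $M_{K_C}\boxtimes M_{K_C}$, $K_C\boxtimes M_{K_C}$, $M_{K_C}\boxtimes K_C$) presents $\pi^*M_N$ in terms of these pieces and a diagonal correction $i_{\Delta*}\ker(\Sym^2H^0(K_C)\otimes\OO_C\to K_C^{\otimes2})$ --- the place where Noether's theorem enters. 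Twisting by $\pi^*N^{\otimes m}=(K_C^{\otimes m}\boxtimes K_C^{\otimes m})(-m\Delta)$ and resolving $-m\Delta$ through $0\to\OO(-j\Delta)\to\OO(-(j-1)\Delta)\to\OO_\Delta\otimes K_C^{\otimes(j-1)}\to0$, the Künneth formula then expresses $H^1(C_2,M_N\otimes N^{\otimes m})$ by a spectral sequence whose entries are products $H^a(C,\wedge^{s}M_{K_C}\otimes K_C^{\otimes b})\otimes H^{1-a}(C,\wedge^{t}M_{K_C}\otimes K_C^{\otimes c})$ and single groups $H^1(C,\wedge^{s}M_{K_C}\otimes K_C^{\otimes b})$.

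The crux is the collapse of this spectral sequence after taking $S_2$-invariants. The single groups $H^1(C,\wedge^{s}M_{K_C}\otimes K_C^{\otimes b})$ with $b\ge 2$ vanish for $s\le 1$ (Noether) and, for $s=2$, precisely when $K_C$ has $N_1$ (Petri); the terms with $s\ge 3$, and the Künneth products arising from the $M_{K_C}\boxtimes M_{K_C}$ piece, must be shown to be killed either by the Koszul/Eagon--Northcott differentials or by the sign in the $S_2$-linearization of $(K_C\boxtimes K_C)(-\Delta)$. When the dust settles, only the Koszul cohomology $K_{0,\bullet}(C,K_C)$ and $K_{1,\bullet}(C,K_C)$ remain, so $H^1(C_2,M_N\otimes N^{\otimes m})=0$ for all $m\ge 1$ exactly when $K_C$ has $N_1$, i.e.\ $\Cliff(C)\ge 2$. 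Carrying out this collapse --- tracking the diagonal twists through the higher direct images along the universal divisor and checking that the $S_2$-invariant part retains only the $p\le 1$ Koszul groups --- is the hard part; the case $m=1$ (quadrics through $C_2$) is the most delicate, as $M_N\otimes N$ is minimally positive, and there I would instead analyze the multiplication map $\Sym^2\!\big(\wedge^2H^0(K_C)\big)\to H^0(C_2,N^{\otimes2})$ directly, again reducing to Noether's and Petri's theorems.

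Finally, for general $k$ (Conjecture~\ref{syzygy-shifting}) the same architecture should apply with $K_C^{[k]}$, the kernel bundle $M_{K_C}^{[k]}$ (rank $g-k$) and $N_L=\det K_C^{[k]}$ in the roles above: the Eagon--Northcott complexes attached to $K_C^{[k]}$ feed exterior powers $\wedge^{\le p+k}M_{K_C}$ into the spectral sequence, the determinant produces the shift $p\mapsto p-(k-1)$, and $\Cliff(C)\ge k$ is exactly what forces the auxiliary $\wedge^{a}M_{K_C}$-terms with $2\le a\le k$ to vanish. Establishing the analogue of the collapse for $k\ge 3$, where the $S_k$-equivariant combinatorics and the diagonal strata of $C^k$ are far more involved, is the principal additional obstacle.
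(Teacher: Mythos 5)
You have set out to prove a statement that the paper itself presents only as a conjecture: the paper offers no proof of syzygy shifting for general $k$ and $p$, only the heuristic motivation of the secant-line example, and the sole instance it actually establishes is $k=2$, $p=1$ (Theorem \ref{main-theorem}, via Theorem \ref{more-general}). Your proposal likewise does not close the conjecture. For general $k$ you offer only the assertion that ``the same architecture should apply,'' and even for $p\geq 2$ at $k=2$ (e.g.\ that $\Cliff(C)\geq 2$ and $N_2$ for $K_C$ give $N_1$ for $\omega_{C_2}$) nothing concrete is said. So at best this is a proposal for the known case, and it should be assessed as such.

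For that case your route is genuinely different from the paper's. The paper reduces, via Mumford regularity (Lemma \ref{regularity}), to $d$-normality for $d=2,3,4$ only, and then treats each $d$ by a different mechanism: Green's duality theorem for $d=4$, a filtration of the tautological kernel bundle $M_{E_L}$ plus semi-stability of $S^2M_{E_L}$ for $d=3$, and a direct comparison of symmetric/exterior algebra pieces with the quadric generation of $I_C$ for $d=2$. You instead propose to verify $H^1(C_2,M_N\otimes N^{\otimes m})=0$ for \emph{all} $m\geq 1$ by pulling back to $C\times C$ and running an $S_2$-equivariant K\"unneth spectral sequence built from Koszul filtrations of $M_{K_C}$ and the diagonal resolution. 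The gap is that the entire content of the argument is concentrated in the step you label ``the crux'': the claim that after taking invariants only the $K_{0,\bullet}$ and $K_{1,\bullet}$ Koszul groups survive is asserted, not proved, and there is concrete reason to doubt it goes through uniformly in $m$. The paper records that $H^1(S,M_{E_K}\otimes N_K)\neq 0$, i.e.\ the analogous kernel-bundle vanishing already fails at the lowest twist for the tautological kernel; your $M_N$ differs from $M_{E_K}$, but this is exactly the regime where cancellations cannot be taken on faith, and you yourself retreat at $m=1$ to a ``direct analysis'' of $\Sym^2\wedge^2H^0(K_C)\to H^0(C_2,N^{\otimes 2})$ --- which is precisely the content of the paper's diagram (\ref{diag-key}) and Lemma \ref{2-norm}, left unexecuted here. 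Until the spectral sequence collapse is carried out term by term (including the $M_{K_C}\boxtimes M_{K_C}$ K\"unneth products and the diagonal corrections), and until the higher-$k$, higher-$p$ cases are addressed at all, this remains a plausible program rather than a proof. A minor slip: the hyperelliptic involution acts on $H^0(K_C)$ by $-1$, not trivially; your conclusion that it acts trivially on $\wedge^2H^0(K_C)$ is nonetheless correct.
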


Our next result refines the backward direction of Theorem \ref{main-theorem} in the spirit of \cite[Conjecture 3.4]{green-laz} and is suggestive of what one might hope for in terms of syzygy shifting for non-canonical line bundles.

\begin{theoremalpha}\label{more-general}
If $L$ is a very ample line bundle on $C$ determining an embedding for which the homogeneous ideal $I_C$ is generated by quadrics, and \[\textnormal{deg}(L) \geq 2g + 2 - 2\hspace{1pt}h^1(C,L) - \textnormal{Cliff}(C),\] then the line bundle $N_L$ on $C_2$ determines a projectively normal embedding.
\end{theoremalpha}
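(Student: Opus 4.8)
The plan is to convert projective normality of $(C_2,N_L)$ into a sequence of Koszul-type $H^1$-vanishings on $C_2$, transfer these to the product $C\times C$ via the tautological sequences for $L^{[2]}$, and reduce them to cohomology statements on $C$ itself. The hypotheses enter at the very end: since $\deg L\ge 2g+2-2h^1(C,L)-\textnormal{Cliff}(C)$ is one better than the bound in Green--Lazarsfeld's projective normality theorem, $L$ is already normally generated; combined with quadric-generation of $I_C$ this says exactly that $L$ has property $N_1$, i.e. $H^1(C,\wedge^j M_L\otimes L^m)=0$ for $j\le 2$ and all $m\ge 1$, where $M_L=\ker(H^0(L)\otimes\mathcal O_C\to L)$.

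\emph{Reduction to vanishing.} Write $V=H^0(C,L)$. One first records that $N_L=\det L^{[2]}$ is very ample with $H^0(C_2,N_L)=\wedge^2 V$ (both facts follow from the positivity of $L$, and are the easy part). Setting $M_{N_L}=\ker(\wedge^2 V\otimes\mathcal O_{C_2}\to N_L)$, a standard argument makes $\bigoplus_n H^0(C_2,N_L^n)$ generated in degree one — hence $(C_2,N_L)$ projectively normal — once $H^1(C_2,M_{N_L}\otimes N_L^n)=0$ for all $n\ge 1$. Taking $\wedge^2$ of $0\to M_{L^{[2]}}\to V\otimes\mathcal O_{C_2}\to L^{[2]}\to 0$ gives $0\to\wedge^2 M_{L^{[2]}}\to M_{N_L}\to M_{L^{[2]}}\otimes L^{[2]}\to 0$, so it suffices to kill $H^1(C_2,\wedge^2 M_{L^{[2]}}\otimes N_L^n)$ and $H^1(C_2,M_{L^{[2]}}\otimes L^{[2]}\otimes N_L^n)$ for every $n\ge 1$.

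\emph{Descent to the curve.} Let $\pi\colon C\times C\to C_2$ be the quotient by the involution, a flat double cover branched along the diagonal $\delta\subset C_2$, with projections $p_1,p_2$ and diagonal $\Delta\subset C\times C$. Then $\pi^*N_L\cong p_1^*L\otimes p_2^*L(-\Delta)$, and one has
\[0\to p_2^*L(-\Delta)\to\pi^*L^{[2]}\to p_1^*L\to 0,\qquad 0\to\pi^*M_{L^{[2]}}\to p_1^*M_L\to p_2^*L(-\Delta)\to 0.\]
Since $H^1(C_2,\mathcal F)$ is a direct summand of $H^1(C\times C,\pi^*\mathcal F)$, it is enough to bound the latter. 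Combining the displayed sequences, the pullbacks of the two bundles of the previous step become iterated extensions of sheaves $p_1^*A\otimes p_2^*B(-k\Delta)$ with $B$ a power of $L$ and $A$ among $\wedge^2 M_L\otimes L^n$, $M_L\otimes L^n$, $L^n$ and their $K_C$-twists. For such a sheaf, the Leray spectral sequence of $p_2$ gives $H^1(C\times C,p_1^*A\otimes p_2^*B(-k\Delta))=H^1(C,B\otimes E_k(A))$ once $A$ is positive enough ($\mu(A)>2g-2+k$), where $E_k(A)=p_{2*}(p_1^*A(-k\Delta))$ is a ``secant bundle'' with $E_1(A)=M_A$ and $0\to E_k(A)\to E_{k-1}(A)\to A\otimes K_C^{\,k-1}\to 0$. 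Iterating, everything reduces to $H^1$-vanishing on $C$ for twists of $M_L$ and $\wedge^2 M_L$ by powers of $L$ and $K_C$, together with surjectivity of a handful of multiplication maps $H^0(C,F)\otimes H^0(C,L)\to H^0(C,F\otimes L)$ for such $F$.

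\emph{Curve input and the main obstacle.} The groups $H^1(C,\wedge^j M_L\otimes L^m)$ with $j\le 2$, $m\ge 1$ vanish by property $N_1$; twists of $\wedge^j M_L$ by $L^m K_C^i$ with $i\ge 1$ are semistable of slope $>2g-2$ — this is where the Clifford bound is spent, guaranteeing positivity — so carry no $H^1$; and the multiplication maps are handled, one degree down, again by $N_1$. The real work, and the main obstacle, is at small $n$ and low positivity: the secant-bundle reformulation degenerates for the bottom pieces (e.g. $p_1^*L\otimes p_2^*L^3(-3\Delta)$), which must be treated by hand, climbing the diagonal filtration order by order and invoking normality and quadric-generation at each stage; and, crucially, the filtrations used in the descent are \emph{not} stable under the covering involution, so one cannot bound $H^1(C_2,-)$ piece by piece on $C\times C$ — the invariant part that actually matters is strictly smaller, and the symmetry must be carried along throughout. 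The case $h^1(L)=1$, where spurious $H^1(C,L)$-terms appear, is precisely what the extra unit in the degree bound $2g+2-2h^1(C,L)-\textnormal{Cliff}(C)$ is there to absorb; specializing to $L=K_C$ (so the bound reads $\textnormal{Cliff}(C)\ge 2$, and $N_1$ is Noether plus Petri) recovers the backward direction of Theorem~\ref{main-theorem}.
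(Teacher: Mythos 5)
Your reduction of projective normality to the vanishings $H^1(C_2,M_{N_L}\otimes N_L^n)=0$ for \emph{all} $n\ge 1$ breaks at exactly the point you flag as ``the main obstacle,'' and the break is not repairable inside that framework. At $n=1$ the cohomological approach fails in the central case $L=K$: the paper records that $H^1(C_2,M_{E_K}\otimes N_K)\neq 0$, and the graded pieces of your filtration $0\to\wedge^2 M_{E_L}\to M_{N_L}\to M_{E_L}\otimes E_L\to 0$ twisted by $N_L$ carry precisely such non-vanishing classes; there is no mechanism in your outline by which the quadric-generation hypothesis on $I_C$ could force $H^1(C_2,M_{N_L}\otimes N_L)=0$. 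In fact $2$-normality is where that hypothesis must genuinely be spent, and it enters through an identity of multiplication maps rather than a vanishing: one identifies $S^2H^0(C,L^2)\cong H^0(C_2,T_{L^2})$, realizes $S^2H^0(C_2,N_L)\to H^0(C_2,N_L^2)\cong H^0(C_2,T_{L^2}(-\delta))$ inside the restriction-to-the-diagonal sequence, and a Snake Lemma chase converts surjectivity of $I_{C,2}\otimes S^2H^0(C,L)\to I_{C,4}$ (exactly quadric generation plus $2$-normality of $L$) into surjectivity of $\mu_1$. Your proposal defers this entire case to ``treated by hand, climbing the diagonal filtration order by order and invoking normality and quadric-generation at each stage,'' which is not an argument --- it is the hardest step of the theorem.

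Two further substantive gaps. First, your Leray/secant-bundle descent only ever produces $H^1$'s on $C$, but once the degree-$2$ case is set aside the surjectivity of $\mu_2$ requires an $H^2$-vanishing on the surface, namely $H^2(C_2,S^2M_{E_L}\otimes N_L^2)=0$ arising from the four-term complex $0\to S^2M_{E_L}\to H^0(E_L)\otimes M_{E_L}\to H^0(N_L)\otimes\mathcal{O}_{C_2}\to N_L\to 0$; the paper proves this by showing $S^2M_{E_L}$ is slope-semistable (restricting to curves $C_p$ and invoking semistability of $M_{L(-p)}$), an input entirely absent from your sketch and not obtainable from property $N_1$ of $L$. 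Second, you correctly observe that your filtrations on $C\times C$ are not involution-equivariant, so that bounding $H^1(C\times C,\pi^*(-))$ piece by piece does not control the invariant summand you need --- but you leave this unresolved. For comparison, the paper sidesteps the all-$n$ reduction altogether: a Mumford-regularity lemma (using Kodaira vanishing via $N_L^k\cong K_S\otimes N_{L^2\otimes K^{-1}}\otimes N_L^{k-2}$ and Kouvidakis' ampleness criterion) reduces the problem to $d$-normality for $d=2,3,4$ only, with $d=4$ handled by Green's duality theorem together with $h^0(C_2,N_K\otimes N_L^{-1})\le 1$, $d=3$ by the filtration and semistability arguments above, and $d=2$ by the direct diagram chase. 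As it stands your proposal is a plan whose decisive steps are missing.
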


The thinking behind the optimistic Conjecture \ref{syzygy-shifting} is that the property of being $p$-very-ample seems to shift by the dimension jump (see the Example below) so perhaps the related property $N_p$ will too. The following example, which readily generalizes, illustrates\footnote{to be completely precise about this, one needs to deal with non-reduced point sets in $C$ as well.} how $3$-very-ampleness of $\mathcal{O}_C(1)$ ``shifts by 1" to $2$-very-ampleness of $\mathcal{O}_{C_2}(1)$. It applies to the present setting since the canonical map of $C_2$ is in fact a secant map as described below (again, see \S\ref{setup}).

\begin{examplenn}\label{example1}
For an embedding $C \subset \mathbb{P} = \mathbb{P}V$, consider the \textit{secant map}
\begin{center}
    \begin{tikzcd}[row sep=tiny]
    C_2 \ar[r,dashrightarrow] & \mathbb{G}(1,\mathbb{P}) \subseteq \mathbb{P}\wedge^2V\\[-5pt]
    p+q \ar[r,mapsto] & \textnormal{Span}(p,q),
    \end{tikzcd}
\end{center}
which is a morphism when $C\subseteq \mathbb{P}$ has no 3-secant-line. If $C$ admits a $4$-secant-$2$-plane $\Lambda \subseteq \mathbb{P}$ --- an obstruction to $3$-very-ampleness of $\mathcal{O}_C(1)$ --- then by fixing one of the points $x \in C\cap \Lambda = \{x_1,x_2,x_3,x_4\}$ we obtain a pencil of lines in $\Lambda$ containing $x$. Three of these are the spans of $\{x_i,x\}$. This pencil thus corresponds to a $3$-secant-line of $C_2 \subseteq \mathbb{P}\wedge^2V$ --- an obstruction to $2$-very-ampleness of $\mathcal{O}_{C_2}(1)$. This construction can be reversed by using the fact that the Grassmannian is cut out by quadrics.
\end{examplenn}

While there has been much study of projective normality and property $N_p$ more generally on surfaces of general type, often the line bundles considered are more positive than the canonical bundle. A selection of the vast body of work addressing this theme includes \cite{gall-purna,han-purna,muk-ray,purna}. Work on the canonical map itself is also extensive and includes \cite{beauville,bom1,bom2,catanese,ciliberto,franciosi,konno,mendes-lopes,mumford}, with perhaps among the strongest results related to the present work being that of Ciliberto in \cite{ciliberto} which indicates that the canonical ring of a surface of general type is generated in degree $\leq 5$ (where projective normality corresponds to generation in degree $1$). This was recovered via different techniques in \cite{gall-purna}. In a different related direction, the interesting recent work \cite{ein.niu.park} studies the syzygetic structure of secant varieties of high degree curves.

Our strategy in establishing Theorem \ref{more-general} combines a few different techniques. We begin with a standard regularity argument, reducing the issue to establishing $d$-normality (see Lemma \ref{regularity}) of the relevant line bundle on $C_2$ for $d = 2,3,4$. For $d = 2$ the argument then uses the quadric generation assumption on the homogeneous ideal $I_C$ directly, via some representation-theoretic comparisons between the line bundles on $C$ and $C_2$ respectively. For $d = 3$ it suffices to establish certain cohomology vanishings for kernel bundles on $C_2$, which we obtain using a couple of standard vector bundle techniques --- namely finding an appropriate filtration and demonstrating stability of a relevant symmetric power. For $d = 4$ it is enough to use Green's duality and vanishing theorems for Koszul cohomology coming from \cite{green}.

The structure of this note is as follows. In \S\ref{setup} we establish notation, recall some standard ideas connected to symmetric products and note a cohomology vanishing statement to be used later. In \S\ref{proof-main} we recall the regularity arguments used in conjunction with the associated theorem of Mumford to reduce Theorem \ref{more-general} to a handful of surjections, which we then prove up to granting Lemmas \ref{3-4-norm} and \ref{stability}. Finally, in \S \ref{filtering} and \S \ref{stability-proof} respectively, we prove these lemmas. We work throughout over $\mathbb{C}$. We are grateful to Rob Lazarsfeld for valuable comments.

\numberwithin{equation}{section}

\section{Setup}\label{setup}

Let $C$ denote a smooth projective curve of genus $g$ over $\mathbb{C}$ and let $K$ denote its canonical line bundle.
\begin{definition}
The \textit{Clifford index} of $C$ is defined to be \[\textnormal{Cliff}(C) := \textnormal{min}\{\hspace{1pt}\text{deg}(L) - 2\hspace{1pt}r(L) \hspace{1pt} : \text{$L$ is a globally generated line bundle of degree $\leq g-1$}\hspace{1pt}\}.\] 
\end{definition}

For $L$ a line bundle on $C$, the following property measures the syzygies of its section ring $R := \oplus_n H^0(C,L^n)$ as a graded module over the polynomial algebra $S:=\textnormal{Sym}\hspace{1pt}H^0(C,L)$.
\begin{definition}[{\cite[\S 3a]{green-laz}}]\label{np}
We say that $L$ \emph{has property $N_p$} if, in the minimal graded-free resolution of its section ring \[\cdots \lra E_1 \lra E_0 \lra R,\] the $S$--module $E_i$ is concentrated in degrees $\leq i + 1$ for all $i \leq p$.
\end{definition}
Thus property $N_0$ means $L$ determines a projectively normal embedding in a projective space $\mathbb{P}$, property $N_1$ means $L$ has $N_0$ and the homogeneous ideal $I_{C/\mathbb{P}}$ is generated by quadrics, property $N_2$ means $L$ has $N_1$ and the relations between the quadric generators are linear. In general we say property $N_p$ means the section ring $R$ \textit{has linear syzygies to order $p$.}

\subsection{Symmetric products}\label{setup-sym}
With $C$ as above, let $S := C_2$ denote the symmetric square of $C$. The following diagram is the key to most of the comparisons we will make between $C$ and its symmetric product:
\begin{center}
    \begin{tikzcd}
    & C\times C\ar[dl,"p_i"']\ar[dr,"\pi"]\\
    C & & S.
    \end{tikzcd}
\end{center}
Here $p_i$ denotes projection to the $i^{\text{th}}$ factor ($i=1,2$) and $\pi$ denotes the quotient map $(p,q) \mapsto p+q$ which ramifies simply along the diagonal \[\Delta := \{(p,p) \in C\times C\}.\]

With $L$ as above, we define the associated tautological bundle on $S$
\begin{align*}
    E_L & := \pi_*p_i^*L
\end{align*}
noting in particular that the cotangent bundle of $S$ is tautological
\begin{equation}\label{cotan-taut}
    \Omega_S^1 \cong E_K.
\end{equation}
We also note that the natural involution on $C\times C$ induces an involution on the rank 2 bundle $\pi_*(L\boxtimes L)$ which subsequently splits
\begin{align*}
\pi_*(L\boxtimes L) \cong T_L\oplus N_L
\end{align*}
into invariant and anti-invariant line bundles. The symmetry is naturally reflected in the global sections:
\begin{equation}
\begin{split}
    H^0(S,E_L) & \cong H^0(C,L)\\
    H^0(S,T_L) & \cong S^2H^0(C,L)\\
    H^0(S,N_L) & \cong \wedge^2H^0(C,L).
\end{split}
\end{equation}
Since the $\mathbb{Z}/2$-action on $C\times C$ induces an isomorphism $\pi_*(\mathcal{O}\boxtimes L) \cong \pi_*(L\boxtimes \mathcal{O})$, we can take the direct sum of evaluation maps along $\pi$ to obtain an injection \[\pi^*E_L \hookrightarrow L\boxplus L\]
which drops rank along $\Delta$ thereby yielding
\begin{align*}
    \text{det}(\pi^*E_L) \cong \text{det}(L\boxplus L)(-\Delta).
\end{align*}
From this, the projection formula, and the fact that $T_L$ is the invariant sub--bundle, we readily identify the pullbacks
\begin{align*}
    \pi^*T_L & \cong L\boxtimes L\\
    \pi^*N_L & \cong (L\boxtimes L)(-\Delta)
\end{align*}
as well as
\begin{align*}
    N_L \cong \text{det}(E_L).
\end{align*}

Finally, we note that any point $x \in C$ determines an embedding \[\iota_x : C \hookrightarrow S\] given by $p \mapsto x + p$ and we denote the image by $C_x$. One sees that
$$
\mathcal{O}_S(-C_x) \cong T_{\mathcal{O}_C(-x)}.
$$

\subsection{A cohomology vanishing}
We conclude this section by establishing a cohomology vanishing result that we will use later. To prepare for it, we note

\begin{proposition}
The bundle $E_L$ is globally generated if and only if $L$ is very ample.
\end{proposition}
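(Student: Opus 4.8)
The plan is to compute the global generation of $E_L$ fibrewise and recognize the resulting condition as the classical infinitesimal criterion for very-ampleness of $L$. Since $\pi$ is finite and flat of degree $2$, the pushforward $E_L = \pi_* p_1^* L$ is locally free of rank $2$ and its formation commutes with arbitrary base change; in particular the fibre $(E_L)_D$ at a point $D \in S$ is canonically $H^0\big(\pi^{-1}(D),\, p_1^* L|_{\pi^{-1}(D)}\big)$. The first step is to check that $p_1$ restricts to an isomorphism $\pi^{-1}(D) \cong D$, where $D$ is viewed as the length-$2$ subscheme of $C$ to which it corresponds. Off the diagonal this is immediate, since $\pi^{-1}(p+q) = \{(p,q),(q,p)\}$ maps isomorphically onto $\{p,q\}$. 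Along the diagonal one argues in local coordinates: if $t$ is a local parameter on $C$ at $p$ and we set $x = t\circ p_1$, $y = t\circ p_2$, then $S$ is locally $\mathrm{Spec}\,\mathbb{C}[x+y,\,xy]$ and $\pi^{-1}(2p) = \mathrm{Spec}\,\mathbb{C}[x,y]/(x+y,\,xy) \cong \mathrm{Spec}\,\mathbb{C}[x]/(x^2)$, with $p_1$ restricting to the closed embedding of this scheme into $C$ as the divisor $2p$. Hence $(E_L)_D \cong H^0(C, L\otimes \mathcal{O}_D)$ for every effective divisor $D$ of degree $2$ on $C$.

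The second step is to identify the fibre evaluation map. By the projection formula (equivalently, $H^0(S, \pi_* p_1^* L) = H^0(C\times C, p_1^* L) = H^0(C,L)$, compatibly with the isomorphism $H^0(S,E_L) \cong H^0(C,L)$ recorded in \S\ref{setup-sym}), every global section of $E_L$ is $p_1^* s$ for a unique $s \in H^0(C,L)$; restricting $p_1^* s$ to $\pi^{-1}(D)$ and transporting it along the isomorphism $\pi^{-1}(D)\cong D$ above yields precisely $s|_D$. Thus the evaluation $H^0(S,E_L)\to (E_L)_D$ is, under these identifications, the restriction map $H^0(C,L)\to H^0(C, L\otimes\mathcal{O}_D)$.

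It then follows that $E_L$ is globally generated if and only if $H^0(C,L)\to H^0(C, L\otimes\mathcal{O}_D)$ is surjective for every effective divisor $D$ of degree $2$. Since the length-$2$ subschemes of the smooth curve $C$ are exactly the degree-$2$ effective divisors --- the reduced ones $p+q$ governing separation of points and the non-reduced ones $2p$ governing separation of tangent vectors --- this is precisely the standard criterion for $L$ to be very ample, which finishes the argument.

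I expect the only genuinely delicate point to be the local computation along the diagonal showing that $p_1\colon \pi^{-1}(2p)\to C$ has scheme-theoretic image $2p$ rather than the reduced point $p$; this is what forces degree-$2$ divisors (and hence the full very-ampleness condition, including tangent directions) to appear, and it matches the parenthetical caveat about non-reduced point sets made in the introduction. Everything else reduces to the projection formula and the classical infinitesimal criterion for very-ampleness.
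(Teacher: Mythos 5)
Your proposal is correct and follows essentially the same route as the paper: identify the fibre of $E_L$ at $D = p+q$ with $H^0(C, L\otimes\mathcal{O}_D)$ and the evaluation map with restriction of sections, then invoke the classical separation-of-points-and-tangents criterion. You simply spell out the base-change and diagonal computations that the paper's one-line proof leaves implicit.
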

\begin{proof}
At a point $p+q \in S$ the evaluation map \[H^0(S,E_L)\otimes \mathcal{O}_S \lra E_L\] coincides with \[H^0(C,L)\lra H^0(C,L\otimes \mathcal{O}_{p+q}).\]
Surjectivity of the latter means that global sections of $L$ separate the points $p$ and $q$ (or their tangents, if $p = q$).
\end{proof}

\begin{remark}\label{v-ample-sub}
Indeed, by the same reasoning, $E_L$ is generated by a subspace $V\subseteq H^0(S,E_L)\cong H^0(C,L)$ if and only if $|V| \subseteq |L|$ is very ample on $C$.
\end{remark}

\begin{proposition}\label{vanishing}
Let $L$ and $A$ be line bundles on $C$ for which \[H^1(C,K\otimes A\otimes L^{-1}) = 0.\] If the multiplication map \[H^0(C,K\otimes A)\otimes H^0(K\otimes A\otimes L^{-1}) \lra H^0(C,K^2\otimes A^2\otimes L^{-1})\] is surjective, then we have the vanishing \[H^1(S,E_L\otimes N_A^{-1}) = 0.\]
\end{proposition}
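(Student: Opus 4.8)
The plan is to compute the cohomology of $E_L \otimes N_A^{-1}$ on $S$ by pulling back to $C \times C$ via $\pi$, where everything becomes a bundle with an explicit filtration, and then to extract the $\mathbb{Z}/2$-invariant part. First I would note that $\pi_* \mathcal{O}_{C\times C} \cong \mathcal{O}_S \oplus \mathcal{O}_S(-R)$ for a suitable reduced divisor $R$ (the image of the diagonal), so that $H^1(S, E_L \otimes N_A^{-1})$ is the invariant summand of $H^1(C\times C, \pi^*(E_L \otimes N_A^{-1}))$; since $\pi$ is finite, $H^1$ of the pullback computes both summands. Using $\pi^* N_A \cong (A \boxtimes A)(-\Delta)$, we get $\pi^* N_A^{-1} \cong (A^{-1}\boxtimes A^{-1})(\Delta)$. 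For $\pi^* E_L$, I would use the rank-$2$ structure: $\pi^*E_L$ sits in the exact sequence $0 \to L \boxtimes \mathcal{O}_C \to \pi^* E_L \to (\mathcal{O}_C \boxtimes L)(-\Delta) \to 0$ (or the analogous one with the factors swapped), which is the pullback of the defining exact sequence of the tautological bundle — concretely, $\pi^* E_L$ has a sub-line-bundle $p_1^* L$ with quotient $p_2^* L(-\Delta)$, reflecting that $\pi^* E_L \hookrightarrow L \boxplus L$ drops rank along $\Delta$.

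Tensoring this sequence by $\pi^* N_A^{-1} = (A^{-1}\boxtimes A^{-1})(\Delta)$ gives
\begin{equation*}
0 \to (L A^{-1} \boxtimes A^{-1})(\Delta) \to \pi^*(E_L \otimes N_A^{-1}) \to (A^{-1} \boxtimes L A^{-1}) \to 0.
\end{equation*}
So it suffices to control $H^1$ of the two outer terms. For the quotient term, $H^1(C\times C, A^{-1} \boxtimes L A^{-1})$ is handled by the Künneth formula: the hypothesis $H^1(C, K A L^{-1}) = 0$ is, by Serre duality, exactly $H^0(C, L A^{-1} \otimes K^{-1}) = 0$ — wait, more carefully, Serre duality on $C$ gives $H^1(C, A^{-1})^\vee \cong H^0(C, KA)$ and $H^1(C, LA^{-1})^\vee \cong H^0(C, KL^{-1}A)$, neither of which need vanish, so the Künneth terms $H^0 \otimes H^1$ and $H^1 \otimes H^0$ must be killed by passing to invariants and combining with the twist by $\Delta$; this is where the multiplication-map hypothesis enters. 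For the sub term $(LA^{-1}\boxtimes A^{-1})(\Delta)$, I would use the exact sequence $0 \to (LA^{-1}\boxtimes A^{-1}) \to (LA^{-1}\boxtimes A^{-1})(\Delta) \to \mathcal{O}_\Delta \otimes (LA^{-1}\boxtimes A^{-1})(\Delta) \to 0$ and restrict to the diagonal, where $\mathcal{O}_{C\times C}(\Delta)|_\Delta \cong T_C = K^{-1}$, so the diagonal contribution is $H^*(C, L A^{-2} K^{-1})$.

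Assembling these, the task reduces to checking that the invariant part of $H^1(C\times C, \pi^*(E_L \otimes N_A^{-1}))$ vanishes, and tracking through the identifications, the obstruction is precisely a cokernel of a multiplication map of the shape $H^0(C, KA)\otimes H^0(C, KAL^{-1}) \to H^0(C, K^2 A^2 L^{-1})$ — after applying Serre duality on $C\times C$ to rewrite $H^1$ of the relevant pieces as duals of global-section spaces. The main obstacle, and the step I would spend the most care on, is the bookkeeping of the $\mathbb{Z}/2$-action: identifying which summand of each pushed-forward or Künneth-decomposed cohomology group is the invariant one and verifying that, after the dust settles, exactly the displayed multiplication map controls the invariant piece. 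The vanishing hypothesis $H^1(C, KAL^{-1}) = 0$ should be exactly what guarantees the relevant connecting maps have the right source, so that surjectivity of the multiplication map yields $H^1(S, E_L \otimes N_A^{-1}) = 0$.
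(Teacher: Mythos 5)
There is a genuine gap, and it sits precisely where you say you would ``spend the most care'': the step in which the multiplication-map hypothesis is supposed to enter is never actually set up. The root of the difficulty is your choice to pull back the rank-two bundle $E_L$ to $C\times C$ and recover $H^1(S,E_L\otimes N_A^{-1})$ as the invariant summand of $H^1(C\times C,\pi^*(E_L\otimes N_A^{-1}))$. That forces you to filter $\pi^*E_L$, run K\"unneth on two separate line bundles, and track the $\mathbb{Z}/2$-action through all of it --- and at the end you concede that the surviving K\"unneth terms ``must be killed by passing to invariants and combining with the twist by $\Delta$'' without showing how. The paper's proof avoids all of this by applying the projection formula in the other direction: since $E_L=\pi_*p_i^*L$, one has $E_L\otimes N_A^{-1}\cong\pi_*\big(p_i^*L\otimes\pi^*N_A^{-1}\big)=\pi_*\big((A^{-1}\boxtimes(L\otimes A^{-1}))(\Delta)\big)$, and finiteness of $\pi$ identifies $H^1(S,E_L\otimes N_A^{-1})$ with the \emph{full} group $H^1\big(C\times C,(A^{-1}\boxtimes(L\otimes A^{-1}))(\Delta)\big)$ of a single line bundle --- no invariant/anti-invariant decomposition and no filtration of a rank-two bundle. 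Serre duality on $C\times C$ converts this to $H^1\big(((K\otimes A)\boxtimes(K\otimes A\otimes L^{-1}))(-\Delta)\big)^{\vee}$, and the long exact sequence of $0\to\mathcal{O}(-\Delta)\to\mathcal{O}\to\mathcal{O}_\Delta\to0$ then exhibits the displayed multiplication map as the restriction $H^0(K\otimes A)\otimes H^0(K\otimes A\otimes L^{-1})\to H^0(\Delta,K^2\otimes A^2\otimes L^{-1})$, with the hypothesis $H^1(C,K\otimes A\otimes L^{-1})=0$ disposing of the K\"unneth contribution to the next term. That single mechanism is what your outline is missing; without it the hypothesis has no concrete point of entry.

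Two concrete errors also appear in the part you do write down. First, your filtration of $\pi^*E_L$ is twisted the wrong way: the counit $\pi^*\pi_*p_1^*L\to p_1^*L$ is surjective, so $L\boxtimes\mathcal{O}_C$ is a \emph{quotient} of $\pi^*E_L$, not a sub-line-bundle, and the determinant computation then forces the correct sequence to be $0\to(\mathcal{O}_C\boxtimes L)(-\Delta)\to\pi^*E_L\to L\boxtimes\mathcal{O}_C\to0$; the $(-\Delta)$ belongs on the sub, not the quotient. Second, the anti-invariant summand of $\pi_*\mathcal{O}_{C\times C}$ is a square root of $\mathcal{O}_S(-\delta)$ (its pullback is $\mathcal{O}_{C\times C}(-\Delta)$), not $\mathcal{O}_S(-R)$ for $R$ the reduced image of the diagonal. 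Neither slip is fatal to the general strategy, but together with the unexecuted $\mathbb{Z}/2$-bookkeeping they leave the proposal well short of a proof.
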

\begin{proof}
Note that by definition of $E_L$ and the projection formula along $\pi$, we have
\begin{align*}
    H^1(S,E_L\otimes N_A^{-1})
    & \cong H^1(C\times C,(A^{-1}\boxtimes (L\otimes A^{-1}))(\Delta))
\end{align*}
since $\pi$ is finite. By Serre duality (and the fact that $K_{C\times C} \cong K\boxtimes K$),
\begin{align*}
    H^1(C\times C,(A^{-1}\boxtimes (L\otimes A^{-1}))(\Delta)) & \cong H^1(C\times C,((K\otimes A)\boxtimes (K\otimes A\otimes L^{-1}))(-\Delta))^{\vee}
\end{align*}
and the latter group can be seen to vanish given the stated conditions by twisting the short exact sequence \[0 \lra \mathcal{O}_{C\times C}(-\Delta) \lra \mathcal{O}_{C\times C} \lra \mathcal{O}_{\Delta} \lra 0,\]
by $(K\otimes A)\boxtimes (K\otimes A\otimes L^{-1})$ and taking the associated long exact sequence of cohomology.
\end{proof}

\section{Proof of Theorems \ref{main-theorem} and \ref{more-general}}\label{proof-main}

In this section, we prove the normal generation statements of our two main theorems up to a small collection of cohomology vanishings to be established afterwards.

\begin{lemma}\label{regularity} For any smooth projective surface $S$ with very ample line bundle $B$, suppose $H^1(S,B^{m-2}) = H^2(S, B^{m-3}) = 0$ for some integer $m$. Then normal generation of $B$ follows from $d$-normality, i.e. surjection of the multiplication map \[ S^dH^0(S,B) \lra H^0(S,B^d),\] for $d = 2,\ldots,m-1$.
\end{lemma}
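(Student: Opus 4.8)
The plan is to run the standard Castelnuovo–Mumford regularity machine. Setting $B$ as the polarization on the surface $S$, we want to show that the section ring $\bigoplus_d H^0(S, B^d)$ is generated in degree $1$. The hypotheses $H^1(S, B^{m-2}) = H^2(S, B^{m-3}) = 0$ are exactly the input needed to apply Mumford's theorem on the regularity of $\mathcal{O}_S$ with respect to $B$: since $B$ is very ample (hence ample and globally generated), once two consecutive vanishings $H^i(S, B^{m-i}) = 0$ for $i = 1, 2$ hold — note $B^{m-2} = (\mathcal{O}_S)\otimes B^{m-i}$ with $i=1$ gives exponent $m-1$, wait, I should line this up carefully — the relevant statement is that if $H^i(S, B^{-i}\otimes B^{m}) = 0$ for $i>0$, i.e. $H^1(S,B^{m-1})$ and $H^2(S,B^{m-2})$ vanish, then $\mathcal{O}_S$ is $m$-regular with respect to $B$, which in turn forces $H^1(S, B^{j}) = 0$ and the surjectivity of $H^0(S,B^{j})\otimes H^0(S,B) \to H^0(S, B^{j+1})$ for all $j \geq m-1$. (I will state the hypothesis indices to match whichever normalization of Mumford regularity I cite; the point is that two consecutive higher cohomology vanishings propagate upward.)

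First I would record the consequence of Mumford's theorem: under the stated vanishings, the multiplication map $H^0(S, B^d) \otimes H^0(S, B) \to H^0(S, B^{d+1})$ is surjective for every $d \geq m-1$. This handles all degrees from $m-1$ onward in one stroke, and it is the part of the argument that genuinely uses the curve-independent regularity input.

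Next, the telescoping step. Normal generation of $B$ is the statement that $S^d H^0(S,B) \to H^0(S, B^d)$ is surjective for all $d \geq 2$. Factor this map through $H^0(S, B^{d-1}) \otimes H^0(S,B) \to H^0(S, B^d)$: if $S^{d-1}H^0(S,B) \to H^0(S,B^{d-1})$ is surjective and $H^0(S,B^{d-1})\otimes H^0(S,B)\to H^0(S,B^d)$ is surjective, then so is $S^d H^0(S,B)\to H^0(S,B^d)$. So by induction on $d$, it suffices to have surjectivity of $H^0(S,B^{d-1})\otimes H^0(S,B)\to H^0(S,B^d)$ for every $d\geq 2$. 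For $d \geq m$ this follows from the Mumford step above (taking $d-1 \geq m-1$). For $2 \leq d \leq m-1$, surjectivity of $H^0(S,B^{d-1})\otimes H^0(S,B)\to H^0(S,B^d)$ is implied by the hypothesized $d$-normality: indeed $d$-normality says $S^d H^0(S,B)\to H^0(S,B^d)$ is onto, and this map factors through $H^0(S,B^{d-1})\otimes H^0(S,B)$, so the latter is onto as well. Assembling the induction then gives normal generation.

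The main obstacle — really the only non-formal point — is getting the precise indexing in Mumford's regularity theorem to align with the stated hypotheses $H^1(S,B^{m-2}) = H^2(S,B^{m-3}) = 0$, and confirming that very ampleness of $B$ (not merely ampleness) is what licenses the base-point-free input to that theorem. Everything else is the routine factorization/telescoping argument above. I expect the write-up to be short: cite Mumford's lemma, deduce surjectivity in degrees $\geq m-1$, then do the two-line induction using $d$-normality to cover degrees $2,\dots,m-1$.
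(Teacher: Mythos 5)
Your argument is correct, and the indexing worry you flag does resolve: the stated vanishings $H^1(S,B^{m-2}) = H^2(S,B^{m-3})=0$ say precisely that $\mathcal{O}_S$ is $(m-1)$-regular with respect to $B$ (the $H^3$ condition being vacuous on a surface), so Mumford's theorem gives surjectivity of $H^0(S,B^{k})\otimes H^0(S,B)\to H^0(S,B^{k+1})$ for all $k\geq m-1$, which is exactly what your telescoping step needs to cover degrees $d\geq m$. Where you differ from the paper is in the object to which regularity is applied: the paper works with the ideal sheaf $\mathcal{I}_{S/\mathbb{P}}$ of the embedding by $|B|$ and uses that $d$-normality is the vanishing $H^1(\mathbb{P},\mathcal{I}_{S/\mathbb{P}}(d))=0$; the hypotheses of the lemma then say $\mathcal{I}_{S/\mathbb{P}}$ is $m$-regular (the $i=1$ input being the assumed $(m-1)$-normality, the $i=2,3$ inputs being the two cohomological vanishings via the ideal-sheaf sequence, and $i\geq 4$ being automatic for a surface), so Mumford's theorem on $\mathbb{P}$ immediately yields $H^1(\mathbb{P},\mathcal{I}_{S/\mathbb{P}}(d))=0$ for all $d\geq m$ with no further induction. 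Your version instead applies regularity to $\mathcal{O}_S$ relative to the polarization $B$ and therefore needs the extra (routine) factorization $S^{d-1}H^0(B)\otimes H^0(B)\to H^0(B^{d-1})\otimes H^0(B)\to H^0(B^d)$ to convert one-step multiplication surjectivity into $d$-normality; in exchange it never mentions the ambient projective space and only uses that $B$ is ample and globally generated rather than very ample. These are two standard faces of the same Castelnuovo--Mumford argument, and either write-up is acceptable here; just be sure to state the regularity normalization once, correctly, rather than leaving it to be aligned later.
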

\begin{proof}
The complete embedding of $S$ in projective space $\mathbb{P}$ by $B$ determines an ideal sheaf $\mathcal{I}_{S/\mathbb{P}}$ for which we have
\begin{align*}
    H^2(\mathbb{P},\mathcal{I}_{S/\mathbb{P}}(m-2)) & \cong H^1(S,B^{m-2})\\
    H^3(\mathbb{P},\mathcal{I}_{S/\mathbb{P}}(m-3)) & \cong H^2(S,B^{m-3}).
\end{align*}
Mumford's theorem for regularity of sheaves on projective space (see e.g. \cite[Theorem 1.8.3]{laz}) implies that \[H^1(\mathbb{P},\mathcal{I}_{S/\mathbb{P}}(d)) = 0\] for all $d \geq m$ whenever
\[H^i(\mathbb{P},\mathcal{I}_{S/\mathbb{P}}(m-i)) = 0\] for all $i > 0$.
\end{proof}

\begin{lemma}\label{2-norm}
When a very ample line bundle $L$ on a smooth projective curve $C$ determines an embedding $C \subset \mathbb{P}$ whose homogeneous ideal is generated in degree 2, the line bundle $B = N_L$ on $S = C_2$ is 2-normal.
\end{lemma}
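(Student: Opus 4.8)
The goal is to prove surjectivity of the multiplication map $S^2H^0(S,N_L)\to H^0(S,N_L^{\otimes2})$. Write $W:=H^0(C,L)$, so that $H^0(S,N_L)\cong\wedge^2W$, realized (after pulling back along $\pi$) as the anti-invariant sections $(x,y)\mapsto w(x)w'(y)-w'(x)w(y)$ of $L\boxtimes L$ attached to $w\wedge w'$. The first step is to make $H^0(S,N_L^{\otimes2})$ explicit. Since $\pi^*N_L^{\otimes2}\cong(L\boxtimes L)^{\otimes2}(-2\Delta)$ and $\pi$ is finite, $H^0(S,N_L^{\otimes2})$ is the $\mathbb{Z}/2$-invariant part of $H^0\bigl(C\times C,(L\boxtimes L)^{\otimes2}(-2\Delta)\bigr)$. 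The key observation is that $\Delta$ is the ramification divisor of $\pi$, so the involution acts by $-1$ on the conormal bundle of $\Delta$; consequently an invariant section of $(L\boxtimes L)^{\otimes2}$ vanishing along $\Delta$ must vanish there to order two. Hence the invariant part of $H^0\bigl((L\boxtimes L)^{\otimes2}(-2\Delta)\bigr)$ agrees with the invariant part of $H^0\bigl((L\boxtimes L)^{\otimes2}(-\Delta)\bigr)$, i.e.\ with the kernel of restriction to $\Delta\cong C$, which on invariant sections is the multiplication map $S^2H^0(L^{\otimes2})\to H^0(L^{\otimes4})$. Because the given embedding of $C$ is projectively normal, so is that defined by $L^{\otimes2}$, and this multiplication map is surjective. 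Thus $H^0(S,N_L^{\otimes2})$ is identified with $\ker\bigl(m\colon S^2H^0(L^{\otimes2})\to H^0(L^{\otimes4})\bigr)$, sitting inside $S^2H^0(L^{\otimes2})$.

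The second step is to trace the multiplication map through these identifications. Multiplying out the anti-invariant sections above, one checks that the composite
\[
S^2(\wedge^2W)\longrightarrow H^0(S,N_L^{\otimes2})\hookrightarrow S^2H^0(L^{\otimes2})
\]
is $p\circ\Phi$, where $\Phi\colon S^2(\wedge^2W)\to S^2(S^2W)$ is the $GL(W)$-equivariant map $(w_1\wedge w_2)(w_3\wedge w_4)\mapsto (w_1w_3)(w_2w_4)-(w_1w_4)(w_2w_3)$ (symmetric products throughout), and $p\colon S^2(S^2W)\to S^2H^0(L^{\otimes2})$ is induced by the surjection $S^2W\twoheadrightarrow H^0(L^{\otimes2})$, whose kernel is $I_2$.

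The remaining steps are representation theory together with the quadratic generation hypothesis. From the plethysms $S^2(\wedge^2W)\cong\wedge^4W\oplus\mathbb{S}_{(2,2)}(W)$ and $S^2(S^2W)\cong S^4W\oplus\mathbb{S}_{(2,2)}(W)$, the equivariant map $\Phi$ annihilates $\wedge^4W$ and, being nonzero (evaluate on $(w_1\wedge w_2)^2$ for independent $w_1,w_2$), carries $\mathbb{S}_{(2,2)}(W)$ isomorphically onto the $\mathbb{S}_{(2,2)}$-summand of $S^2(S^2W)$; hence the image of our multiplication map is $p\bigl(\mathbb{S}_{(2,2)}(W)\bigr)$. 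On the other hand $m\circ p$ factors as $S^2(S^2W)\xrightarrow{\,r\,}S^4W\twoheadrightarrow H^0(L^{\otimes4})$, where $r$ is the symmetric multiplication, with $\ker r=\mathbb{S}_{(2,2)}(W)$, and the second arrow has kernel $I_4$. Since $I_C$ is generated by quadrics, $I_4=S^2W\cdot I_2=r(\widetilde I_2)$ where $\widetilde I_2:=I_2\cdot S^2W\subseteq S^2(S^2W)$ is exactly $\ker p$ (the standard computation of the kernel of $S^2$ applied to $S^2W\twoheadrightarrow H^0(L^{\otimes2})$). Therefore $\ker(m\circ p)=\ker r+\widetilde I_2=\mathbb{S}_{(2,2)}(W)+\ker p$, and surjectivity of $p$ and $m$ gives $\ker m=p\bigl(\mathbb{S}_{(2,2)}(W)+\ker p\bigr)=p\bigl(\mathbb{S}_{(2,2)}(W)\bigr)$. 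Comparing, the image of $S^2H^0(S,N_L)\to H^0(S,N_L^{\otimes2})$ equals $p\bigl(\mathbb{S}_{(2,2)}(W)\bigr)=\ker m=H^0(S,N_L^{\otimes2})$, proving $2$-normality.

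The step I expect to be the main obstacle is the first: obtaining the concrete description of $H^0(S,N_L^{\otimes2})$ and recognizing the multiplication map as the plethystic operator $\Phi$ followed by reduction modulo $I_2$. After that, the coincidence that $\mathbb{S}_{(2,2)}(W)$ appears in both $S^2(\wedge^2W)$ and $S^2(S^2W)$, combined with the fact that quadratic generation drives the quartic relations $I_4$ into $\ker p$, makes the surjectivity automatic.
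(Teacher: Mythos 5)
Your argument is correct and is essentially the paper's own proof in different clothing: the exact top row $S^2\wedge^2W \to S^2S^2W \to S^4W \to 0$ of the paper's diagram (\ref{diag-key}) is precisely your plethysm statement that the image of $\Phi$ equals $\ker(r)=\mathbb{S}_{(2,2)}(W)$, the identification $H^0(S,N_L^{\otimes 2})\cong \ker(m)$ via $N_L^2\cong T_{L^2}(-\delta)$ matches your descent-and-ramification computation, and the paper's snake-lemma step (surjectivity of $\ker(b)\to\ker(c)=I_{C,4}$ from quadratic generation) is your explicit computation $\ker(m\circ p)=\mathbb{S}_{(2,2)}(W)+\ker(p)$. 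You supply some details the paper defers to \cite{thesis} (notably that the multiplication map is $p\circ\Phi$), and, like the paper, you use $2$-normality of $L$ alongside quadratic generation of $I_C$, which is available in the intended range of application.
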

\begin{proof}
With our hypothesis of generation of the homogeneous ideal in degree 2, the proof of \cite[Proposition 9.13]{thesis} applies to the present situation even without the $\text{deg}(L) \geq 2g+2$ condition assumed there since the degree 2 generation of the ideal is the essential part.
\end{proof}

The ``if'' direction of Theorem \ref{main-theorem} follows from Theorem \ref{more-general} by taking $L = K$, so we present the proof of the latter first. Afterwards we give the shorter argument for the ``only if" direction of Theorem \ref{main-theorem}.

\begin{proof}[Proof of Theorem \ref {more-general}]
As before let $K$ denote the canonical on $C$ and let $S = C_2$. Surjectivity of the multiplication map \[\mu_n : H^0(S,N_L)\otimes H^0(S,N_L^n)\lra H^0(S,N_L^{n+1})\] together with $n$-normality of $N_L$ will imply $(n+1)$-normality by the commutative diagram
\begin{center}
    \begin{tikzcd}
    H^0(S,N_L)\otimes S^nH^0(S,N_L) \ar[r] \ar[d] & S^{n+1}H^0(S,N_L)\ar[d]\\
    H^0(S,N_L)\otimes H^0(S,N_L^n) \ar[r] & H^0(S,N_L^{n+1}).
    \end{tikzcd}
\end{center}
Note also that $H^1(S,N_L^3) = H^2(S,N_L^2) = 0$ since by \S\ref{setup} we can write, for any $k$, \[N_L^k \cong K_S\otimes N_{L^2\otimes K^{-1}} \otimes N_L^{k-2}\] to which Kodaira vanishing can be applied for $k\geq 2$ as long as $N_{L^2\otimes K^{-1}}$ is still ample --- and this follows for the $L$ under consideration since \cite[\S 5]{kouvidakis} says $N_B$ is ample as soon as $\text{deg}(B) - 1 - g > 0$.

So by Lemma \ref{regularity} we are done if we establish $d$-normality of $N_L$ for $d = 2,3,4$ --- that is, surjectivity of $\mu_n$ for $n = 1,2,3$.

For $d = n+1 = 4$ this follows from from an application of Koszul duality: by \cite[Theorem (2.c.6)]{green} the cokernel coincides with a Koszul cohomology group \[\text{coker}(\mu_4)^{\vee} \cong K_{r-2,-1}(S,K_S;N_L)\]
where $r = h^0(S,N_L)-1$, and by \cite[Theorem (3.a.1)]{green} the latter vanishes since
\begin{align*}
    h^0(S,E\otimes N_L^q) \leq p
\end{align*}
for $E = K_S = N_K$, $q = -1$ and $p = r-2$. To see this, note that by \S\ref{setup}
\begin{align*}
    N_K\otimes N_L^{-1} & \cong T_{K\otimes L^{-1}}\\
    \\
    \implies \text{dim }H^0(S,N_K\otimes N_L^{-1}) & = \text{dim }S^2H^0(C,K\otimes L^{-1})\\
    \\
    & = {h^1(C,L) + 1\choose 2} \leq 1
\end{align*}
since the assumptions in Theorem \ref{more-general} force $h^1(C,L) \leq 1$.

For $d = n+1 =3$ we use the fact that the evaluation map of $N_L$ sits in the following exact sequence \[0 \lra S^2M_{E_L} \lra H^0(S,E_L)\otimes M_{E_L} \lra H^0(S,N_L)\otimes \OO_S \lra N_L \lra 0\] where $M_{E_L}$ denotes the kernel of the evaluation map $H^0(S,E_L)\otimes \mathcal{O}_S \lra E_L$ (surjective by the fact that $L$ is very ample).
This follows, for example, from \cite[Theorem B.2.2]{laz} applied to the evaluation map of $N_L$, with $k=1$. From this it is clear that for any $m$, surjectivity of the multiplication map $\mu_m$ will follow from the vanishings
\begin{align*}
    H^1(S,M_{E_L}\otimes N_L^m) & = 0\\
    H^2(S,S^2M_{E_L}\otimes N_L^m) & = 0
\end{align*}
by \cite[Proposition B.1.1]{laz}. 
We prove these vanishings for $m = n = 2$ in Lemmas \ref{3-4-norm} and \ref{stability} (following).

For $d = n + 1 = 2$ the cohomology vanishing approach does not work (indeed for $L = K$ we find $H^1(S,M_{E_L}\otimes N_L)\not= 0$) and the Koszul duality theorem does not help. Instead we invoke Lemma \ref{2-norm}, which is equivalent to the present case ($d = n+1 = 2$), and to demonstrate the more direct approach used to establish it, we recall the key idea in that proof here: consider the following commutative diagram with exact rows
\begin{equation}\label{diag-key}
    \begin{tikzcd}
    & S^2\wedge^2H^0(C,L) \ar[r] \ar[d,"a"] & S^2S^2H^0(C,L) \ar[r]\ar[d,"b"] & S^4H^0(C,L)\ar[d,"c"] \ar[r] & 0\\
    0 \ar[r] & \text{ker}(m) \ar[r] & S^2H^0(C,L^2) \ar[r,"m"] & H^0(C,L^4)
    \end{tikzcd}.
\end{equation}
Since $b$ is surjective when $L$ is 2-normal, the Snake Lemma implies that $\text{coker}(a) = 0$ if and only if the map \[\begin{tikzcd}\text{ker}(b)\ar[r] & \text{ker}(c)\end{tikzcd}\] is a surjection. But we note
\begin{itemize}
    \item[$(i)$] with $I_C$ denoting the homogeneous ideal of the projective embedding determined by $|L|$, the maps \[\begin{tikzcd}I_{C,2}\otimes S^2H^0(C,L) \ar[d]\ar[dr,"\varphi"] \\\text{ker}(b)\ar[r] & \text{ker}(c) = I_{C,4} \end{tikzcd}\]
    commute;
    \item[$(ii)$] from \S\ref{setup-sym} we have  \[S^2H^0(C,L^2) \cong H^0(S,T_{L^2})\] and, in addition, with $\delta \subseteq S$ denoting the image $\pi(\Delta) \cong C$ of the diagonal $\Delta \subseteq C\times C$,
    \[T_{L^2}\vert_{\delta} \cong L^4\]
    so that the map $m$ in Diagram \ref{diag-key} coincides with restriction of global sections of $T_{L^2}$ to $\delta$ on $S$ and therefore \[\text{ker}(m) \cong H^0(S,T_{L^2}(-\delta));\]
    \item[$(iii)$] it also follows from \S\ref{setup-sym} that \[N_L^2 \cong T_{L^2}(-\delta)\] and, with some work (again, see \cite[\S 9]{thesis} for details), one finds that the map $a$ in \ref{diag-key} in fact coincides with the multiplication map \[S^2H^0(S,N_L) \lra H^0(S,N_L^2)\] through which $\mu_1$ clearly factors.
\end{itemize}
Putting these three observations together we see that, under the 2-normality assumption on $L$, we obtain surjectivity of $\mu_1$ when $I_C$ is generated by its degree 2 component.

\end{proof}

\begin{proof}[Proof of Theorem A]
As mentioned above, it remains to establish the ``only if" direction of the statement --- that if $K_S$ determines a projectively normal embedding of $S$, then $\text{Cliff}(C) \geq 2$. This is immediate from classical results, yet for completeness we present the argument: already the very ampleness of $K_S = N_K$ (see Equation \ref{cotan-taut}) implies $C$ can not be hyperelliptic --- every fiber $\{p,q\}\subset C$ of the hyperelliptic map would determine a basepoint $p+q \in S$ of $K_S$. Thus $K$ is very ample on $C$ and so then by \cite[Main Theorem]{catanese-goettsche} the very ampleness of $N_K$ is equivalent to $2$-very-ampleness of $K$ on $C$. This rules out the possibility that $C$ is trigonal, so either $C$ is a smooth plane quintic or $\text{Cliff}(C) \geq 2$. But by the $d = n + 1 = 2$ part of the proof of Theorem \ref{more-general} above, 2--normality of $K_S$ implies that the multiplication \[I_{C,2}\otimes S^2H^0(C,K) \lra I_{C,4}\] is surjective --- this does not happen if $C$ is a smooth plane quintic.
\end{proof}

It remains to establish each of the following:

\begin{lemma}\label{3-4-norm}
When $C$ contains a reduced effective divisor $D$ of degree $\textnormal{deg}(D) \leq \textnormal{deg}(L) - 1 - g$ which imposes independent conditions on $L$, for which $L(D)$ is non-special and basepoint free, and for which $L(-D)$ remains very ample, we have the cohomology vanishing \[H^1(S,M_{E_L}\otimes N_L^2) = 0.\]
\end{lemma}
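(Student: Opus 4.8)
The plan is to filter $M_{E_L}$ using the points of $D$, so as to reduce the vanishing to one substantive statement at the ``bottom'' of the filtration --- where the very ampleness of $L(-D)$ enters --- together with a few auxiliary vanishings that come down to cohomology of line bundles on $C$ and on $C\times C$. Throughout I will use that $H^1(S,N_L^2)=0$; this holds by Kodaira vanishing, since $N_L^2\cong K_S\otimes N_{L^2\otimes K^{-1}}$ with the second factor ample under our hypotheses --- cf. the proof of Theorem \ref{more-general}. Now, writing $D=x_1+\cdots+x_k$, the inclusion $L(-D)\hookrightarrow L$ on $C$ pushes forward along the finite map $\pi$ to an inclusion $E_{L(-D)}\hookrightarrow E_L$ which is an isomorphism away from $\bigcup_j C_{x_j}$ and whose cokernel $Q$ is supported there, admitting a filtration whose graded pieces are the structure sheaves $\mathcal{O}_{C_{x_j}}$ (up to one-dimensional twists). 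Since $L(-D)$ is very ample, $E_{L(-D)}$ is globally generated (Remark \ref{v-ample-sub}), with $H^0(S,E_{L(-D)})\cong H^0(C,L(-D))$ of codimension $\deg D$ in $H^0(S,E_L)\cong H^0(C,L)$ --- the codimension being exactly $\deg D$ because $D$ imposes independent conditions on $L$. Comparing the evaluation sequences of $E_{L(-D)}$ and $E_L$ via the snake lemma then yields
\[0\lra M_{E_{L(-D)}}\lra M_{E_L}\lra P\lra 0,\qquad 0\lra P\lra \mathcal{O}_S^{\oplus\deg D}\lra Q\lra 0,\]
so, after tensoring by $N_L^2$, it suffices to prove (i) $H^1(S,M_{E_{L(-D)}}\otimes N_L^2)=0$ and (ii) $H^1(S,P\otimes N_L^2)=0$.

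I would dispatch (ii) first. Using $H^1(S,N_L^2)=0$ and the filtration of $Q$, statement (ii) follows once the restriction $H^0(S,N_L^2)\to H^0(C_{x_j},N_L^2|_{C_{x_j}})$ is surjective for each $j$ (the intermediate obstructions being $H^1(C_{x_j},N_L^2|_{C_{x_j}})=H^1(C,L^2(-2x_j))=0$, which is valid since $\deg L\geq g+2$). For this surjectivity, identify $H^0(S,N_L^2)\cong\ker\big(S^2H^0(C,L^2)\to H^0(C,L^4)\big)$ via $N_L^2\cong T_{L^2}(-\delta)$ and $T_{L^2}|_\delta\cong L^4$ (observations $(ii)$ and $(iii)$ in the proof of Theorem \ref{more-general}); under this identification, together with $\iota_x^*N_L^2\cong L^2(-2x)$, the restriction map to $C_x$ becomes the partial-evaluation-at-$x$ map $\xi\mapsto\xi(x,\,\cdot\,)$ landing in $H^0(C,L^2(-2x))$, and a short argument (lift a target section by $s_0\odot\sigma$ for $s_0(x)=1$, then correct using sections vanishing at $x$) reduces its surjectivity to surjectivity of the multiplication $H^0(C,L^2(-x))\otimes H^0(C,L^2(-x))\to H^0(C,L^4(-2x))$ on $C$ --- which holds because $\deg(L^2(-x))=2\deg L-1\geq 2g+1$, so $L^2(-x)$ is normally generated.

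For (i) I would reduce, by the evaluation sequence of $E_{L(-D)}$ and $H^1(S,N_L^2)=0$ once more, to surjectivity of $H^0(S,E_{L(-D)})\otimes H^0(S,N_L^2)\to H^0(S,E_{L(-D)}\otimes N_L^2)$, and then push everything forward along $\pi$: by the projection formula this becomes a statement about sections of bundles of the shape $\big(M\boxtimes L^2\big)(-2\Delta)$ on $C\times C$, to be analyzed with K\"unneth and the short exact sequence $0\to\mathcal{O}_{C\times C}(-\Delta)\to\mathcal{O}_{C\times C}\to\mathcal{O}_\Delta\to 0$ (and its square). The required inputs are then exactly (a) vanishing of $H^1(C,-)$ for several line bundles of large degree, where the hypothesis that $L(D)$ is non-special, together with the degree bound, is what is needed, and (b) surjectivity of multiplication and Gaussian-type maps among high-degree line bundles on $C$, which is standard Castelnuovo--Mumford theory (with semistability of an auxiliary kernel bundle on $C$ playing the role it plays in Lemma \ref{stability}).

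The main obstacle I anticipate is organizational rather than conceptual. First, one must arrange the filtration and the pushforwards along $\pi$ so that every intermediate bundle stays positive enough for the $H^1$-vanishings on $C\times C$ to hold. Second, one must handle carefully the passage between cohomology on $S$ and on $C\times C$, where only the $\mathbf{Z}/2$-invariant part descends --- working with the tautological bundles $E_M$ rather than with the $p_i^*M$ makes this bookkeeping automatic, since $E_M$ already absorbs the anti-invariant summand. The one genuinely substantive step is (i): the very ampleness of $L(-D)$, and the consequent bound $\deg(L(-D))\geq g+1$, are precisely what make $E_{L(-D)}$ globally generated and the resulting curve-level maps surjective.
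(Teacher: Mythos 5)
Your overall strategy coincides with the paper's: filter $M_{E_L}$ by the exact sequence $0\to M_{E_{L(-D)}}\to M_{E_L}\to \oplus_{x\in D}\mathcal{O}_S(-C_x)\to 0$ obtained by comparing the evaluation sequences of $E_{L(-D)}$ and $E_L$ (using that $D$ imposes independent conditions), and treat the two pieces separately. Your step (ii) is correct but more laborious than necessary: the paper gets $H^1(S,N_L^2\otimes\mathcal{O}_S(-C_x))=0$ in one line from $N_L^2\otimes\mathcal{O}_S(-C_x)\cong K_S\otimes N_{L^2\otimes K^{-1}(-x)}$, Kouvidakis' ampleness criterion and Kodaira vanishing, whereas you re-derive the equivalent $H^0$-surjectivity by hand from normal generation of $L^2(-x)$.

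The gap is in step (i), which is where the entire content of the lemma lives. Reducing to surjectivity of $H^0(C,L(-D))\otimes H^0(S,N_L^2)\to H^0(S,E_{L(-D)}\otimes N_L^2)$ is a valid but essentially contentless reformulation (given $H^1(S,N_L^2)=0$ it is equivalent to the vanishing you are trying to prove), and your plan for establishing it --- push forward to $C\times C$, K\"unneth, diagonal sequences, ``standard Castelnuovo--Mumford theory,'' semistability of an auxiliary kernel bundle --- does not identify the mechanism that makes it work; in particular semistability plays no role here (it is the tool for the $H^2$ vanishing of Lemma \ref{stability}, a different statement). The paper's argument at this point is specific: it filters once more, choosing a $4$-dimensional very ample subseries $V\subseteq H^0(C,L(-D))$ (Remark \ref{v-ample-sub}) so that the sub-kernel $M_{V,E_{L(-D)}}$ is identified with $(E_{L(-D)})^*$ and the corresponding quotient is a trivial bundle, handled again by $H^1(S,N_L^2)=0$; then Serre duality on $S$ converts $H^1(S,(E_{L(-D)})^*\otimes N_L^2)$ into $H^1(S,E_{L(-D)}\otimes N_{L^2\otimes K^{-1}}^{-1})$, and Proposition \ref{vanishing} converts that into precisely the two curve-level hypotheses of the lemma: $H^1(C,L(D))=0$ and surjectivity of $H^0(C,L^2)\otimes H^0(C,L(D))\to H^0(C,L^3(D))$, the latter via Green's $H^0$-lemma using the degree bound on $D$ (which yields $h^1(C,L(-D))\leq h^0(C,L(D))-2$) and base-point-freeness of $L(D)$. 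Your sketch correctly guesses that the hypotheses on $L(D)$ must enter at this stage, but without the reduction to $(E_{L(-D)})^*$ and the Serre-duality/diagonal argument of Proposition \ref{vanishing} you have not actually brought them to bear, so the substantive half of the lemma remains unproved.
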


\begin{lemma}\label{stability}
For $L$ as in Theorem \ref{more-general}, if the kernel bundles $M_{L(-p)}$ are \emph{semi-stable} for a general $p \in C$, then we have the cohomology vanishing \[H^2(S,S^2M_{E_L}\otimes N_L^2)=0.\]
\end{lemma}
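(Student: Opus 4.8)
The plan is to dualize the desired vanishing and then establish the resulting vanishing of global sections by a moving-curve argument, which reduces it to the semi-stability hypothesis on $C$.

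Since $\omega_S\cong N_K$ (by \eqref{cotan-taut} together with $N_A\cong\det E_A$) and $(S^2F)^{\vee}\cong S^2(F^{\vee})$ over $\mathbb{C}$, Serre duality gives
\[
H^2\big(S,S^2M_{E_L}\otimes N_L^2\big)^{\vee}\;\cong\;H^0(S,\mathcal{G}),\qquad \mathcal{G}:=S^2\big(M_{E_L}^{\vee}\big)\otimes N_L^{-2}\otimes N_K,
\]
so it suffices to show $H^0(S,\mathcal{G})=0$. For this I would exploit the family of curves $\{C_x\}_{x\in C}$, which sweep out $S$ and are pairwise distinct and irreducible (one recovers $x$ from $C_x\cap\delta=\{2x\}$): a nonzero section $\sigma\in H^0(S,\mathcal{G})$ has zero locus a proper closed subset of the surface $S$, hence of dimension at most one, so only finitely many $C_x$ can lie in it and $\sigma|_{C_x}\neq0$ for general $x$. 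Thus it is enough to prove $H^0(C_x,\mathcal{G}|_{C_x})=0$ for general $x$, and we may take $x$ general enough that moreover $M_{L(-x)}$ is semi-stable.

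To compute $\mathcal{G}|_{C_x}$ I would first restrict the evaluation sequence $0\to M_{E_L}\to H^0(C,L)\otimes\mathcal{O}_S\to E_L\to0$ to $C_x$; this stays exact since $E_L$ is locally free, so $M_{E_L}|_{C_x}$ is the kernel of $H^0(C,L)\otimes\mathcal{O}_{C_x}\to E_L|_{C_x}$. Pulling this back to $\{x\}\times C\subseteq C\times C$ and composing with the injection $\pi^*E_L\hookrightarrow L\boxplus L$ of \S\ref{setup-sym}, under which the evaluation becomes $(\textnormal{ev}_1,\textnormal{ev}_2)$, one identifies the kernel with $\ker\big(H^0(C,L(-x))\otimes\mathcal{O}_C\to L(-x)\big)=M_{L(-x)}$; via $\iota_x\colon C\xrightarrow{\ \sim\ }C_x$ this reads $M_{E_L}|_{C_x}\cong M_{L(-x)}$. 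Combined with the intersection numbers $N_A\cdot C_x=\deg A-1$ (from $\pi^*N_A\cong(A\boxtimes A)(-\Delta)$) and $\omega_S\cdot C_x=2g-3$, this yields
\[
\mathcal{G}|_{C_x}\;\cong\;S^2\big(M_{L(-x)}^{\vee}\big)\otimes\mathcal{L}_x,\qquad \deg\mathcal{L}_x=2g-1-2\deg L.
\]

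Finally I would bring in semi-stability. For general $x$ the bundle $M_{L(-x)}$ is semi-stable by hypothesis, hence so are $M_{L(-x)}^{\vee}$, its symmetric square (symmetric powers of semi-stable bundles being semi-stable over $\mathbb{C}$), and therefore $\mathcal{G}|_{C_x}$. Since a semi-stable bundle of negative slope on a curve has no nonzero sections, the argument comes down to checking $\mu(\mathcal{G}|_{C_x})<0$, that is
\[
\frac{2(\deg L-1)}{h^0(C,L(-x))-1}\;<\;2\deg L+1-2g.
\]
Using $h^0(C,L(-x))=h^0(C,L)-1$ (as $L$ is very ample), the hypothesis $\deg L\geq 2g+2-2h^1(C,L)-\textnormal{Cliff}(C)$ --- which also forces $h^1(C,L)\leq1$ --- and the standard bound $\textnormal{Cliff}(C)\leq\lfloor(g-1)/2\rfloor$, this follows from a short computation treating $h^1(C,L)=0$ and $h^1(C,L)=1$ separately. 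I expect this numerical inequality to be the genuine pinch point of the proof: it is exactly where the degree hypothesis of Theorem \ref{more-general} is consumed, whereas the semi-stability input enters cleanly once the restriction $\mathcal{G}|_{C_x}\cong S^2(M_{L(-x)}^{\vee})\otimes\mathcal{L}_x$ has been pinned down.
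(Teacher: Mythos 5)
Your argument is correct and is essentially the paper's: both dualize via Serre duality and reduce everything to semi-stability of $S^2M_{L(-x)}$ on $C$ through the identification $M_{E_L}\vert_{C_x}\cong M_{L(-x)}$, arriving at the identical slope inequality $2(d-1)/(h^0(C,L)-2)<2d+1-2g$. The only difference is organizational --- the paper first records $\mu$-semi-stability of $S^2M_{E_L}$ on $S$ with respect to the class of $C_x$ (Lemma \ref{taut-ker-stab}) and then applies the slope inequality to the injection $N_L^2\otimes K_S^{-1}\hookrightarrow S^2(M_{E_L})^{\vee}$ on the surface, whereas you restrict the dual section directly to a general $C_x$ and conclude on the curve.
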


Each of these Lemmas applies in the setting of Theorem \ref{more-general} --- in the first, taking $D = p+q$ for almost (see Remark \ref{rem-glob} below) any pair of distinct points $p,q \in C$ suffices since the assumptions on $L$ imply it is $2$-very-ample; in the second, the semi-stability hypothesis follows from \cite[Theorem 1.3]{camere}.

\begin{remark}\label{rem-glob}
Effectiveness of $D$ and global generation of $L$ imply, respectively, the inequalities and the equality in
\begin{equation}\label{rmk-eqn}
h^1(C,L(D)) \leq h^1(C,L) = h^1(C,L(-x)) \geq h^1(C,L(D)(-x)).
\end{equation}
Therefore when $L$ is non-special, all of the above are zero and so $L(D)$ is also globally generated. However, if $L$ is special one needs to rule out the possibility that $h^1(C,L(D)) = 0$ while the rest are $1$ (note that the assumptions of Theorem \ref{more-general} imply $h^1(C,L) \leq 1$). This can be done by choosing $D$ to be disjoint from the unique element of $|K\otimes L^{-1}|$ since then both \[h^1(C,L(D)) = h^0(C,K\otimes L^{-1}(-D)) = 0\] and \[h^1(C,L(D)(-x)) = h^0(C,K\otimes L^{-1}(x)(-D)) = 0\] for any $x \in C$ (including $x \in D$ since $\text{deg}(D) = 2$). Thus $L(D)$ is also globally generated.

\end{remark}

\section{Filtering the tautological kernel bundle}\label{filtering}

In this section we prove Lemma \ref{3-4-norm} by producing a filtration of $M_{E_L}$ inspired by standard arguments used in the case of curves.\\
\\
With notation as in previous sections, suppose $D \subseteq C$ is a reduced effective divisor for which $A := L(-D)$ remains very ample. Then we have an exact sequence \[0 \lra A \lra L \lra L\vert_D \lra 0\] to which we can apply the operation $\pi_*p_i^*(\underline{\hspace{6pt}})$, obtaining \[0 \lra E_A \lra E_L \lra \pi_*(p_i^*(L\vert_D)) \cong \oplus_{x \in D}\mathcal{O}_{C_x} \lra 0\] whose right--most map is surjective since $\pi$ is finite. Providing that global sections of $L$ separate points of $D$ we can write \[H^0(L)/H^0(A) \cong \oplus_{x \in D} H^0(L\vert_x)\] and obtain the following diagram
\begin{equation}\label{diag}
    \begin{tikzcd}
    & 0\ar[d] & 0\ar[d] & 0\ar[d]\\
    0 \ar[r] & M_{E_A} \ar[r] \ar[d] & H^0(C,A)\otimes \mathcal{O}_S \ar[r,"a"] \ar[d] & E_A\ar[d]\\
    0 \ar[r] & M_{E_L} \ar[r] \ar[d,"b"] & H^0(C,L)\otimes \mathcal{O}_S \ar[r] \ar[d] & E_L \ar[r] \ar[d] & 0\\
    0 \ar[r] & \mathcal{K} \ar[r] & \oplus H^0(L\vert_x)\otimes \mathcal{O}_S \ar[r] \ar[d] & \oplus \mathcal{O}_{C_x}\ar[d]\ar[r] & 0\\
    &  & 0 & 0
    \end{tikzcd}.
\end{equation}
By the Snake Lemma, the map $b$ will be surjective if $A = L(-D)$ remains very ample (since this is equivalent to surjectivity of $a$). By Remark \ref{v-ample-sub} a similar diagram can be constructed when $|V|$ is a very ample sub-linear series of $|L|$ and so, successively choosing $A = L(-D)$ and $V \subseteq H^0(C,L(-D))$ very ample with $\text{dim}(V) = 4$, we obtain exact sequences
\begin{center}
    \begin{tikzcd}[row sep = tiny]
    0 \ar[r] & M_{E_A} \ar[r] & M_{E_L} \ar[r] & \mathcal{K}\cong\oplus\mathcal{O}_S(-C_x) \ar[r] & 0\\
    0 \ar[r] & M_{V,E_A} \cong (E_A)^* \ar[r] & M_{E_A} \ar[r] & (H^0(L(-D))/V) \otimes \mathcal{O}_S \ar[r] & 0
    \end{tikzcd}.
\end{center}
From these it is clear that establishing vanishings
\begin{align*}\label{eqn}
    H^1(S,N_L^2) & = 0\\
    H^1(S,(E_A)^*\otimes N_L^2) & = 0 \tag{*}\\
    \text{and}\quad H^1(S,N_L^2\otimes \mathcal{O}_S(-C_x)) & = 0 
\end{align*}
(for $x \in D$) would imply that \[H^1(S,M_{E_L}\otimes N_L^2) = 0.\] This allows:

\begin{proof}[Proof of Lemma \ref{3-4-norm}]
With the setup outlined above, the result will follow from establishing the vanishings (*).
Using the observations in \S\ref{setup-sym}, we have
\begin{align*}
    N_L^2 & \cong N_K\otimes T_{L\otimes K^{-1}}\otimes N_L\\
    & \cong K_S \otimes N_{L^2\otimes K^{-1}}\\
    N_L^2\otimes \mathcal{O}_S(-C_x) & \cong N_L^2\otimes T_{\mathcal{O}_C(-x)}\\
    & \cong K_S\otimes N_{L^2\otimes K^{-1}(-x)}
\end{align*}
and
\begin{align*}
    H^1(S,(E_A)^*\otimes N_L^2)^{\vee} & \cong H^1(S,E_A\otimes N_L^{-2}\otimes K_S)\\
    & \cong H^1(S,E_A\otimes N_{L^2\otimes K^{-1}}^{-1}).
\end{align*}
By \cite[\S 5]{kouvidakis} $N_B$ is ample when $\text{deg}(B) - 1 - g > 0$ so, since $\text{deg}(L^2\otimes K^{-1}(-x)) \geq g+2$, the above shows that
\begin{align*}
    H^1(S,N_L^2) = H^1(S,N_L^2\otimes \mathcal{O}_S(-C_x)) = 0
\end{align*}
by Kodaira vanishing. For the remaining vanishing, by Proposition \ref{vanishing} and the fact that $L(D)$ is non-special, we just require that the map \[H^0(C,L^2)\otimes H^0(C,L(D)) \lra H^0(C,L^3(D))\] is surjective. But, using the effectiveness of $D$ and the assumption on its degree, we have \[h^1(C,L(-D)) \leq \textnormal{deg}(D) + h^1(C,L) \leq h^0(C,L) - 2 \leq h^0(C,L(D)) - 2,\] and so surjectivity follows from \cite[Theorem (4.e.1)]{green} since $L(D)$ is assumed basepoint free. So after all we have \[H^1(S,E_A\otimes N_{L^2\otimes K^{-1}}^{-1}) = 0,\] completing the proof.

\end{proof}

\section{Stability of the kernel bundle}\label{stability-proof}

In this section we prove Lemma \ref{stability} by establishing slope semi-stability of the bundle $S^2M_{E_L}$ with respect to the natural ample class on $S$. Again keeping notation as in previous sections, we begin with a brief summary of the numerics on $S$ that we will need for the stability statement.\\
\\
Define the N\'{e}ron--Severi class \[x := c_1(\mathcal{O}_S(C_q)) \in \textnormal{NS}(S)\] for $q \in C$ (any point $q$ will determine the same class). We will abuse notation slightly and denote by \[\delta \in \textnormal{NS}(S)\] the class of $\mathcal{O}_S(\delta)$ for $\delta = \pi(\Delta)$ defined above. We have (e.g. from \cite[Lemmas 1 \& 7]{kouvidakis}) the intersection products \[ x^2 = x\cdot (\delta/2) = 1;\quad (\delta/2)^2 = 1-g\] where $g = g(C)$. By \S\ref{setup-sym}, letting $d := \text{deg}(L)$, we have
\begin{align*}
    c_1(T_L) & = dx\\
    c_1(N_L) & = dx - \delta/2.
\end{align*}
Moreover, it is well-known that $x$ is an ample class on $S$ and so we define $\mu$ to be the slope function, relative to $x$, for coherent sheaves $\mathcal{F}$ on $S$ i.e. \[\mu(\mathcal{F}) := \frac{c_1(\mathcal{F})\cdot x}{\text{rk}(\mathcal{F})}.\]

With these things in place, we are almost ready to prove Lemma \ref{stability} --- the last thing before doing so is to establish
\begin{lemma}\label{taut-ker-stab}
The bundle $S^2M_{E_L}$ on $S$ is $\mu$-slope semi-stable provided that $M_{L(-p)}$ is semi-stable on $C$ for a general point $p \in C$.
\end{lemma}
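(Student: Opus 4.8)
The plan is to reduce semi-stability of $S^2 M_{E_L}$ on $S$ to semi-stability of the symmetric square of a kernel bundle on the curve $C$, using the diagonal embedding $\iota_x : C \hookrightarrow S$ and the filtration of $M_{E_L}$ already constructed in \S\ref{filtering}. First I would record the relevant numerics: from the intersection products $x^2 = x\cdot(\delta/2) = 1$ and $(\delta/2)^2 = 1-g$, together with $c_1(N_L) = dx - \delta/2$ and the fact that $M_{E_L}$ has rank $r := h^0(C,L) - 1$ and $c_1(M_{E_L}) = -c_1(E_L) = -(dx - \delta/2)$ (the latter from $N_L \cong \det E_L$), one computes the slope $\mu(S^2 M_{E_L})$ explicitly. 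I would then note that, by a standard fact, $S^2 M_{E_L}$ is $\mu$-semi-stable as soon as $M_{E_L}$ itself is $\mu$-semi-stable, since $x$-semi-stability is preserved under taking symmetric (indeed all Schur) functors in characteristic zero. So the real content is semi-stability of $M_{E_L}$.

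To get semi-stability of $M_{E_L}$, the plan is to exploit that slope with respect to an ample class can be tested on a sufficiently positive curve, and that the natural curve to use is $C_x = \iota_x(C)$, whose class is $x$. I would restrict $M_{E_L}$ to $C_x$ and identify the restriction: since $\pi^* E_L \hookrightarrow L \boxplus L$ drops rank along $\Delta$, pulling back along $\iota_x$ (equivalently, restricting the evaluation sequence of $E_L$ to $C_x \cong C$) should give $E_L\vert_{C_x} \cong L \oplus \mathcal{O}_C(x)$ or a closely related extension, and correspondingly $M_{E_L}\vert_{C_x}$ should sit in an exact sequence relating it to $M_L$ (or $M_{L(-x)}$) on $C$ up to twists by $\mathcal{O}_C(-x)$. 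Running the same restriction through the two short exact sequences at the end of \S\ref{filtering} — the one with quotient $\bigoplus \mathcal{O}_S(-C_x)$ and the one exhibiting $(E_A)^*$ as a subbundle — lets me compare destabilizing subsheaves of $M_{E_L}$ with destabilizing subsheaves of $M_{L(-p)}$ on $C$, whose semi-stability is the hypothesis. The point is that a saturated subsheaf $\mathcal{G} \subseteq M_{E_L}$ with $\mu(\mathcal{G}) > \mu(M_{E_L})$ would restrict, on a general $C_x$ (general so that $C_x$ avoids the finitely many bad loci and so that $p = $ the relevant point is general), to a subsheaf of $M_{E_L}\vert_{C_x}$ contradicting semi-stability of $M_{L(-p)}$; quantitatively this requires checking that $\mu_{C_x}$ of the restriction equals $\mu_x$ of the original up to the slope contributions one can compute from the numerics above, i.e. that $C_x$ is "effective enough" to detect the maximal destabilizing subsheaf.

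The main obstacle I expect is precisely this last comparison: making rigorous that $x$-slope (semi-)stability on the surface $S$ can be detected by restriction to the curves $C_x$, and bookkeeping the various twists by $\mathcal{O}_C(\pm x)$ and $\mathcal{O}_C(-D)$ so that the numerical inequality coming from a hypothetical destabilizer on $S$ transfers to a genuine contradiction with semi-stability of $M_{L(-p)}$ on $C$. One clean way to handle the restriction principle is to invoke that for a $\mu_x$-destabilizing subsheaf the maximal one is unique, hence invariant under the monodromy of the family $\{C_x\}_{x \in C}$, and to use Mehta–Ramanan/Flenner-type restriction theorems (or an ad hoc argument, since $\dim S = 2$ makes this elementary) to pull the instability down to a general $C_x$; the remaining work is then the purely numerical check, which the intersection table and the formulas for $c_1(N_L), c_1(T_L)$ reduce to arithmetic in $d$, $g$, and $r$. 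Once $M_{E_L}$ is semi-stable, $S^2 M_{E_L}$ is too, and Lemma \ref{stability} will follow from the hypothesis via \cite[Theorem 1.3]{camere}.
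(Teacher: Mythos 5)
Your plan is correct and follows essentially the same route as the paper: restrict to a general curve $C_p$ --- which requires no restriction theorem or monodromy argument, since $x=[C_p]$ is exactly the class computing $\mu$, so a saturated destabilizer on $S$ restricts on a general $C_p$ to a subsheaf of the same slope --- identify $M_{E_L}\vert_{C_p}\cong M_{L(-p)}$ (this is \cite[Lemma 3.7]{mistretta}; note that $E_L\vert_{C_p}$ has determinant of degree $d-1$, so it is not $L\oplus\mathcal{O}_C(p)$, consistent with $\det E_L\cong N_L$), and invoke the characteristic-zero semi-stability of symmetric powers. The only divergence is the order of the two steps: the paper restricts a destabilizer of $S^2M_{E_L}$ itself and then needs the symmetric-power theorem only on the curve $C$, where it is the classical statement cited in \cite{pos2} and \cite{hartshorne}, whereas your reduction to semi-stability of $M_{E_L}$ invokes the (true but deeper) surface version of that theorem before restricting.
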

\begin{proof}
We adapt the proof of \cite[Theorem 3.8]{mistretta}: specifically, note that if a sub-sheaf $F\subseteq S^2M_{E_L}$ (reflexive, without loss of generality) were $\mu$-destabilizing, then
\begin{equation}\label{stab-eq}
\mu(F) > \mu(S^2M_{E_L}) = 2\mu(M_{E_L}).
\end{equation}
For a general choice of $p \in C$, restricting the inclusion $F\subseteq S^2M_{E_L}$ to $C_p \subseteq S$ yields a sub--bundle
\[F\vert_{C_p} \subseteq S^2M_{E_L}\vert_{C_p}.\] However, by \cite[Lemma 3.7]{mistretta} we have \[M_{E_L}\vert_{C_p}\cong M_{L(-p)},\] and since $x = c_1(\mathcal{O}_S(C_p))$ is the class defining the slope $\mu$ we have
$$
\mu(F) = \mu(F\vert_{C_p});\quad \mu(M_{E_L}) = \mu(M_{L(-p)})
$$
(here we abuse notation slightly by using $\mu$ to also denote slope on the curve $C_p \cong C$). Thus Equation \ref{stab-eq} would imply $S^2M_{L(-p)}$ is unstable on $C$. Since we are in characteristic 0, however, the symmetric power $S^2M_{L(-p)}$ is semi-stable whenever $M_{L(-p)}$ is (this implication is well-known, but see e.g. \cite[Corollary 6.4.14]{pos2} or \cite[Theorem I.10.5]{hartshorne} for details).
\end{proof}

\begin{proof}[Proof of Lemma \ref{stability}]
For the sake of contradiction, suppose we have \[H^2(S,S^2M_{E_L}\otimes N_L^2)\not= 0.\]
Then by Serre duality we have an injection \[\begin{tikzcd}N_{L^2\otimes K^{-1}}\cong N_L^2\otimes K_S^{-1} \ar[r,hookrightarrow] & (S^2M_{E_L})^{\vee} \cong S^2(M_{E_L})^{\vee}\end{tikzcd}\] and so, by the $\mu$-slope semi-stability supplied by Lemma \ref{taut-ker-stab}, this implies that
\begin{align*}
\mu(N_{L^2\otimes K^{-1}}) & \leq \mu(S^2(M_{E_L})^{\vee}) = 2\hspace{1pt}\mu((M_{E_L})^{\vee}).
\end{align*}
Using the numerics summarized above this means
\begin{align*}
    2d-(2g-2) - 1 & \leq \frac{2(d - 1)}{h^0(C,L)-2},
\end{align*}
or equivalently,
\begin{align*}
    h^0(C,L) & \leq 3 + \frac{2g-5}{2(d-g)+3}
\end{align*}
which is incompatible with $h^0(C,L) \geq d+1-g$ for the range of $d$ considered.
\end{proof}

\end{document}